\DeclareFontFamily{OT1}{pzc}{}
\DeclareFontShape{OT1}{pzc}{m}{it}%
             {<-> s * [1.195] pzcmi7t}{}
\DeclareMathAlphabet{\mathscr}{OT1}{pzc}%
                                 {m}{it}
\newcommand{\Spec}{\operatorname{Spec}}
\newcommand{\sma}{{\scriptstyle{\wedge}}}
\renewcommand{\O}{{\mathcal O}}
\newcommand{\colim}{\operatorname{colim}}
\newcommand{\cplx}{{\mathbb C}}
\newcommand{\Z}{{\mathbb Z}}
\newcommand{\aone}{{\mathbb A}^1}
\newcommand{\pone}{{\mathbb P}^1}
\newcommand{\gm}[1]{{{\mathbf G}_{m}^{#1}}}
\newcommand{\ho}[1]{\mathscr{H}({#1})}
\newcommand{\hop}[1]{\mathscr{H}_{\bullet}({#1})}
\newcommand{\bpi}{\bm{\pi}}
\newcommand{\Sm}{\mathrm{Sm}}
\newcommand{\Spc}{\mathrm{Spc}}
\newcommand{\K}{{{\mathbf K}}}
\newcommand{\Addresses}{{
 \bigskip
 \footnotesize

 A.~Asok, Department of Mathematics, University of Southern California, 3620 S. Vermont Ave.,
  Los Angeles, CA 90089-2532, United States; \textit{E-mail address:} \url{asok@usc.edu}

  \medskip

 J.~Fasel, Institut Fourier - UMR 5582, Universit\'e Grenoble-Alpes, CS 40700, 38058 Grenoble Cedex 9; France \textit{E-mail address:} \url{jean.fasel@gmail.com}
}}
\newcounter{intro}
\theoremstyle{plain}
\newtheorem{thm}{Theorem}[subsection]
\newtheorem{lem}[thm]{Lemma}
\newtheorem{cor}[thm]{Corollary}
\newtheorem{prop}[thm]{Proposition}
\newtheorem*{claim*}{Claim}  
\newtheorem*{thm*}{Theorem}
\newtheorem*{problem*}{Problem}
\newtheorem{thmintro}{Theorem}
\newtheorem{questionintro}[thmintro]{Question}
\theoremstyle{definition}
\theoremstyle{remark}
\newtheorem{rem}[thm]{Remark}
\newtheorem{remintro}[thmintro]{Remark}
\numberwithin{equation}{section}
\begin{document}
\pagestyle{fancy}
\renewcommand{\sectionmark}[1]{\markright{\thesection\ #1}}
\fancyhead{}
\fancyhead[LO,R]{\bfseries\footnotesize\thepage}
\fancyhead[LE]{\bfseries\footnotesize\rightmark}
\fancyhead[RO]{\bfseries\footnotesize\rightmark}
\chead[]{}
\cfoot[]{}
\setlength{\headheight}{1cm}

\author{Aravind Asok\thanks{Aravind Asok was partially supported by National Science Foundation Awards DMS-0966589 and DMS-1254892.} \and Jean Fasel\thanks{Jean Fasel was supported by the DFG Grant SFB Transregio 45.}}

\title{{\bf Algebraic vs. topological vector bundles on spheres}}
\date{}
\maketitle

\begin{abstract}
We study the problem of when a topological vector bundle on a smooth complex affine variety admits an algebraic structure.  We prove that all rank $2$ topological complex vector bundles on smooth affine quadrics of dimension $11$ over the complex numbers admit algebraic structures.
\end{abstract}


\section{Introduction}
If $X$ is a smooth complex algebraic variety, write $\mathscr{V}_n(X)$ for the set of isomorphism classes of rank $n$ algebraic vector bundles on $X$, and $\mathscr{V}_n^{top}(X)$ for the set of isomorphism classes of rank $n$ complex topological vector bundles on $X^{an}:=X(\cplx)$ viewed as a complex manifold.  There is a function ``forget the algebraic structure"
\[
{\mathfrak R}_{n,X}: \mathscr{V}_n(X) \longrightarrow \mathscr{V}_n^{top}(X).
\]
A complex topological vector bundle of rank $n$ on $X^{an}$ is called algebraizable if it lies in the image of $\mathfrak{R}_{n,X}$.

It is a classical (and difficult) problem to construct indecomposable vector bundles of low rank on ``simple" smooth algebraic varieties.  Rather than attempting to make the notion of ``simple" precise, we focus on an example.  Let $Q_{2n-1}$ be the smooth quadric hypersurface in ${\mathbb A}^{2n}$ cut out by the equation $\sum_{i=1}^n x_i y_i = 1$.  The underlying complex manifold $Q_{2n-1}^{an}$ is homotopy equivalent to the sphere $S^{2n-1}$.  The goal of this note is to establish the following result.

\begin{thmintro}
\label{thmintro:main}
The map ${\mathfrak R}_{2,Q_{11}}$ is surjective, i.e., every rank $2$ topological vector bundle on $Q_{11}$ admits an algebraic structure.  In particular, there exist indecomposable rank $2$ vector bundles on $Q_{11}$.
\end{thmintro}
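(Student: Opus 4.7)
The plan is to pass to motivic homotopy theory. Morel's classification theorem (in the form extended to the smooth affine setting by Asok--Hoyois--Wendt) identifies $\mathscr{V}_2(Q_{11})$ with the pointed $\aone$-homotopy classes $[Q_{11}, B\GL_2]_{\aone}$, and identifies $\mathfrak{R}_{2,Q_{11}}$ with the map to $[Q_{11}(\cplx), BU(2)]$ induced by complex Betti realization. Since $\Pic(Q_{11}) = 0$ and $H^2(S^{11},\Z) = 0$, the first Chern class vanishes on both sides, and via the fibration $B\SL_2 \to B\GL_2 \to B\gm{}$ it suffices to establish surjectivity at the level of $B\SL_2$.

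Next, I would use the standard computation that $Q_{2n-1}$ is $\aone$-equivalent to $\A^n \setminus 0$ and hence to the motivic sphere $S^{2n-1,n}$; for $n=6$ this yields $Q_{11} \simeq_{\aone} S^{11,6}$, with complex realization $S^{11}$. Together with the equivalences $\Omega_s B\SL_2 \simeq \SL_2 \simeq_{\aone} S^{3,2}$, the problem reduces to showing that the comparison map
\[
\piaone_{10,6}(S^{3,2})(\cplx) \longrightarrow \pi_{10}(S^3) = \Z/15
\]
is surjective. This is a question about an unstable motivic homotopy group of spheres over $\Spec\cplx$; stably the relevant bidegree is $(7,4)$.

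The core of the argument is to exhibit enough motivic elements to surject onto $\Z/15 \cong \Z/3 \oplus \Z/5$. I would analyze the motivic Postnikov tower of $B\SL_2$, starting from Morel's identifications of the lowest nontrivial $\aone$-homotopy sheaves (e.g.\ $\piaone_2(B\SL_2) \cong \mathbf{K}_2^{\MW}$), and use the stabilization map to the motivic sphere spectrum to import classes from the motivic stable stems. The topological generators are the Toda classes $\alpha_1$ (of order $3$) and $\beta_1$ (of order $5$) in $\pi_{10}(S^3)$, which one must lift to the unstable motivic setting either via the motivic $J$-homomorphism, suitable motivic Toda-bracket constructions, or explicit algebraic bundle constructions, and then verify that the unstabilization map is surjective in the specific bidegree $(10,6)$ onto the relevant piece of the target.

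The main obstacle will be the $\Z/5$ summand. The $\Z/3$ piece, carried by $\alpha_1$-type classes, is within reach of $\eta$-related and Milnor-Witt K-theoretic tools already developed in Morel's work and in the present authors' previous work on vector bundles on lower-dimensional smooth affine quadrics. By contrast, the $\Z/5$ summand lies in a range of the motivic stable stems that is considerably more delicate to access; handling it is likely to demand either a direct analysis of the motivic Adams (or Adams-Novikov) spectral sequence at the prime $5$ over $\Spec \cplx$, or a concrete geometric construction of an algebraic rank-$2$ bundle whose topological class realizes a generator of $\langle \beta_1 \rangle$, followed by a careful check that no obstruction to unstabilization intervenes in the required bidegree.
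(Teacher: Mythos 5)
Your reduction to a surjectivity statement for a complex-realization map on motivic homotopy groups is essentially the paper's first step (modulo convention: your $\piaone_{10,6}(S^{3,2})$ is the paper's $\bpi_{4,6}^{\aone}(SL_2) = \bpi_{5,6}^{\aone}(BSL_2)$), and you correctly identify the $\Z/5$ summand of $\pi_{10}(S^3) \cong \Z/15$ as the essential difficulty. But from that point the proposal stops being a proof. You list two possible strategies (a motivic Adams or Adams--Novikov spectral sequence computation at $p=5$ over $\Spec\cplx$, or a direct geometric construction of a bundle realizing $\beta_1$) without executing either, and neither of those is actually how the result is obtained. That is a genuine gap: the reduction is routine, and the content of the theorem lies precisely in the step you leave open.

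The idea the proposal is missing is the \emph{symplectic spectral sequence}. The paper builds an exact couple out of the $\aone$-fiber sequences $Sp_{2n-2} \to Sp_{2n} \to \A^{2n}\setminus 0$, producing a spectral sequence with $E^1$-terms the sheaves $\bpi_{p+q,j}^{\aone}(\A^{2p}\setminus 0)$ and abutment the Grothendieck--Witt sheaves $\bpi_{*,j}^{\aone}(Sp)$. Complex realization maps this to the topological analogue (with abutment $\pi_*(Sp(\infty))$, which Bott periodicity computes). Because $\pi_{2p}(Sp(\infty))$ has no $p$-torsion, and because of Serre's vanishing range for $p$-torsion in $\pi_*(S^n)$, a specific topological differential $d_2^{top}$ is forced to surject onto $\pi_{10}(S^3)$; this is the content of Lemma 4.1.2. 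The proof then chases the commutative square comparing the algebraic and topological $d_2$'s: the left vertical map $\bpi_{5,6}^{\aone}(\A^6\setminus 0)(\cplx) \to \pi_{11}(S^{11})$ is shown to be a \emph{split} injection, with the splitting hinging on the $\pone$-stable non-triviality of the motivic Hopf map $\nu$ (and hence of the class $\delta$). That stable non-triviality is itself established geometrically, by showing the cone of $\nu$ is Panin--Walter's $\mathrm{HP}^2$ and exhibiting a non-trivial $Sq^4$ on its mod-$2$ motivic cohomology. None of this appears in your proposal; a Postnikov-tower or Adams-spectral-sequence attack on the unstable group at $p=5$ is a different (and, as far as the paper is concerned, unresolved) route, whereas the spectral-sequence comparison lets one bypass the hard unstable $p=5$ calculation entirely by arranging for a differential to be automatically surjective.
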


\begin{remintro}
With some additional analysis, it is possible to prove that ${\mathfrak R}_{r,Q_{2n-1}}$ is actually surjective for arbitrary $r$ and $n \leq 6$. ``Stable" versions of results like Theorem \ref{thmintro:main} are classical \cite{Fossum}.  There are also some unstable results for spheres of small dimension \cite{Moore}.  From a modern point of view, all of these results are easy to establish; see for example the introduction of \cite{AsokFaselHopkins} for a discussion of the algebraizability problem for arbitrary smooth varieties of small dimension.
\end{remintro}

\begin{remintro}
For a general smooth complex affine $X$ the map ${\mathfrak R}_{r,X}$ need not be either injective or surjective:  injectivity can fail for $r = 1$ on smooth complex affine curves, and surjectivity can fail for $r = 1$ on smooth complex affine surfaces.  Indeed, by Grauert's Oka-principle \cite{Grauert}, the classifications of analytic and topological vector bundles on a Stein manifold, e.g., a smooth complex affine variety, coincide.  On the other hand, it follows from the long exact sequence in cohomology attached to the exponential sheaf sequence that smooth affine curves of positive genus have no non-trivial analytic line bundles, even though they have plenty of non-trivial algebraic line bundles.
\end{remintro}

\begin{remintro}
One reason why the case $r = 2$ and $n = 6$ is especially interesting is as follows.  It is an open problem to determine whether, when $X = {\mathbb P}^n$, $n \geq 4$, all rank $2$ vector bundles are algebraizable (see, e.g., \cite[Chapter 1 \S 6.5]{OSS}).  There is a smooth surjective morphism $Q_{2n-1} \to {\mathbb P}^{n-1}$.  While we do not know, e.g., whether all rank $2$ vector bundles on ${\mathbb P}^5$ are algebraizable, Theorem \ref{thmintro:main} implies that any rank $2$ topological complex vector bundle on ${\mathbb P}^5$ becomes algebraic after pullback to $Q_{11}$.  The problem of whether the original bundle on ${\mathbb P}^5$ is algebraizable can then be viewed as a question in descent theory.
\end{remintro}

The set $\mathscr{V}_r^{top}(Q_{2n-1})$ is, by means of the homotopy equivalence $Q_{2n-1}^{an} \cong S^{2n-1}$, in bijection with the set of free homotopy classes of maps $[S^{2n-1},BU(r)]$.  Because $BU(r)$ is simply connected, the canonical map from pointed to free homotopy classes of maps is a bijection, i.e., $\pi_{2n-1}(BU(r)) \to [S^{2n-1},BU(r)]$ is a bijection.  On the other hand, for $n \geq 3$, the map $\pi_{2n-1}(BSU(r)) \to \pi_{2n-1}(BU(r))$ is an isomorphism.  In the special case where $r = 2$ we know that $\pi_{2n-1}(BSU(2)) \cong \pi_{2n-2}(SU(2)) \cong \pi_{2n-2}(S^3)$.

Fix a field $k$ and write $\ho{k}$ for the Morel-Voevodsky $\aone$-homotopy category of $k$-schemes.  F. Morel gave an algebraic analog of Steenrod's celebrated homotopy classification of vector bundles: there is an $\aone$-homotopy classification of algebraic vector bundles on smooth affine schemes; see \cite[Theorem 1]{AHW} for a precise statement.  As explained in the introduction to \cite{AsokFaselSpheres}, by a procedure analogous to that described in the previous paragraph, there is a canonical bijection
\[
\mathscr{V}^o_r(Q_{2n-1}) \cong \bpi_{n-1,n+1}^{\aone}(SL_r),
\]
where $\mathscr{V}^o_r(X)$ is the set of isomorphism classes of oriented vector bundles on a smooth scheme $X$ (see also \cite[Theorem 4.1.1]{AHWII}), i.e., vector bundles with a chosen trivialization of the determinant.

The comparison between the results of the previous two paragraphs is facilitated by ``complex realization," which provides a homomorphism
$\bpi_{n-1,n+1}^{\aone}(SL_2)(\cplx) \to \pi_{2n}(S^3)$.  To establish the result above, it suffices to prove the displayed homomorphism is surjective.  Thus, the above algebraizability question boils down to a question regarding $\aone$-homotopy sheaves of $SL_2$.  Theorem \ref{thmintro:main} is then a consequence of the following result.

\begin{thmintro}[See Theorem \ref{thm:oddtorsion}]
\label{thmintro:maincomputation}
The homomorphism
\[
\bpi_{4,6}^{\aone}(SL_2)(\cplx) \longrightarrow \pi_{10}(S^3) \cong \Z/15.
\]
is surjective.
\end{thmintro}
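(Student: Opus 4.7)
I would prove surjectivity one prime at a time using the decomposition $\Z/15 \cong \Z/3 \oplus \Z/5$. For each odd prime $p \in \{3,5\}$ the $p$-primary component $\pi_{10}(S^3)_{(p)}$ is cyclic, generated by (a desuspension of) the classical Toda $\alpha_1$-element at $p$. So it suffices to exhibit, for each such $p$, a class in $\bpi_{4,6}^{\aone}(SL_2)(\cplx)$ whose complex realization generates $\pi_{10}(S^3)_{(p)}$.

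The first step is to trade $SL_2$ for a motivic sphere. The standard $\aone$-weak equivalence $SL_2 \simeq S^{3,2}$ (obtained by combining the affine-bundle projection $SL_2 \to \A^2 \setminus 0$ with $\A^2 \setminus 0 \simeq S^{3,2}$) reduces the problem to the surjectivity of
\[
\bpi_{4,6}^{\aone}(S^{3,2})(\cplx) \longrightarrow \pi_{10}(S^3).
\]
The second step is to produce motivic lifts of the topological $\alpha_1^{(p)}$. Here I would appeal to odd-primary motivic stable homotopy computations over $\cplx$ (work of Morel, Andrews--Miller, and Isaksen), which produce explicit motivic $\alpha$-family elements $\widetilde{\alpha}_1^{(p)}$ in $\bpi_{*,*}^{\aone}(\So)(\cplx)$ whose complex realizations are the topological $\alpha_1^{(p)}$; the key input is that at odd primes the motivic Adams spectral sequence over $\cplx$ matches its topological counterpart up to a controlled $\tau$-shift in weight. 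Composing these classes with an appropriate iterate of the motivic Hopf map (to correct the bidegree) and landing in $S^{3,2}$ via the motivic EHP/James splitting should then produce unstable classes in $\bpi_{4,6}^{\aone}(SL_2)(\cplx)$ realizing the desired topological generators.

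The main obstacle, in my view, is this final desuspension step: I need a stable motivic $\widetilde{\alpha}_1^{(p)}$ to come from an actual unstable map into $S^{3,2}$ in the correct bidegree, not merely from an abstract stable stem class. At $p = 3$ this should follow from a direct motivic Hopf-invariant argument in Morel's framework together with the known unstable behavior of $\eta$ and $\nu$ in the relevant simplicial-weight range. At $p = 5$ the argument is more delicate, since the target $S^{3,2}$ is rather far from the stable range for the element of interest; my expectation is that this case requires realizing Toda's classical bracket description of $\alpha_1^{(5)}$ by an explicit algebraic-geometric construction (for instance, by composing motivic Whitehead products of Hopf-type maps between quadrics), whose complex realization can then be matched against Toda's generator.
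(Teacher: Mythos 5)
Your plan diverges substantially from the paper's. You propose to work stably, producing motivic $\alpha_1$-family elements at $p=3,5$ over $\cplx$ via the motivic Adams spectral sequence, and then desuspending these stable classes to unstable elements of $\bpi_{4,6}^{\aone}(\A^2\setminus 0)(\cplx)$. The paper instead stays unstable throughout: it builds a motivic analogue of Mahowald's orthogonal spectral sequence (the ``symplectic spectral sequence,'' from the $\aone$-fiber sequences $Sp_{2n-2} \to Sp_{2n} \to \A^{2n}\setminus 0$) and compares it, via complex realization, to the topological symplectic spectral sequence built from $Sp(2n-2)\to Sp(2n)\to S^{4n-1}$.

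The gap in your proposal is the desuspension step, and it is more serious than you allow. At $p=5$, $\alpha_1$ lives stably in the $7$-stem, while $\pi_{10}(S^3)$ sits three suspensions below the Freudenthal range for $S^3$; producing the first $5$-torsion in $\pi_{10}(S^3)$ unstably is precisely the hard content of Serre's classical theorem, which uses the Serre spectral sequence of $BS^1 \to S^3\langle 3\rangle \to S^3$ and Serre class theory. Neither of those tools, nor a motivic EHP/James theory over $\cplx$ strong enough to carry a stable $\widetilde\alpha_1^{(5)}$ down to bidegree $(4,6)$ on $\A^2\setminus 0$, is available. Your fallback — ``motivic Whitehead products of Hopf-type maps between quadrics'' realizing a Toda bracket — names a hoped-for construction rather than providing one. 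So the argument is incomplete exactly where you flag concern.

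The paper avoids the desuspension problem entirely by relocating where the hard class is produced. In the topological symplectic spectral sequence, the generator of $\pi_{10}(S^3) \cong \Z/15$ arises as $d_2^{top}(24\iota)$ with $\iota$ the fundamental class of $\pi_{11}(S^{11})$ (Lemma \ref{lem:topologicalsurjectivity}, treating both $p=3$ and $p=5$). On the motivic side, the corresponding source $\bpi_{5,6}^{\aone}(\A^6\setminus 0)(\cplx)\cong \K^{MW}_0(\cplx)\cong \Z$ lies in the range controlled by Morel's connectivity/Hurewicz theorem and realizes onto $\pi_{11}(S^{11})$; the real work is then to show $24\Sigma_{\pone}\iota$ survives to the $E_2$-page, i.e.\ lies in the kernel of $d_1$ towards $\bpi_{4,6}^{\aone}(\A^4\setminus 0)(\cplx)$. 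That is supplied by the stable non-triviality of the class $\delta$, proved geometrically by identifying the cone of the algebraic Hopf map $\nu$ with $\mathrm{HP}^2$ and exhibiting a non-trivial $Sq^4$. If you wish to salvage a more ``stable'' argument, you will still need an unstable input of comparable strength, and the symplectic spectral sequence is exactly the tool the paper builds to supply it.
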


The above result suggests the following question (which has a positive answer for primes $p \leq 5$).

\begin{questionintro}
If $p$ is a prime number, is the homomorphism
\[
\bpi_{p-1,p+1}^{\aone}(SL_2)(\cplx) \longrightarrow \bpi_{2p}(S^3)
\]
surjective on $p$-components?
\end{questionintro}

That $\pi_{10}(S^3) \cong \Z/15$ is classical (see, e.g., \cite{Toda}), so to establish Theorem \ref{thmintro:maincomputation} it suffices simply to produce a lift of a generator.  The group $\pi_{10}(S^3)$ is especially interesting because it is the first place where $5$-torsion appears in homotopy groups of $S^3$.  To prove the main result, we introduce a spectral sequence whose $E_1$-page involves $\aone$-homotopy sheaves of punctured affine spaces and that converges to the $\aone$-homotopy sheaves of the stable symplectic group; this is achieved in Subsection \ref{ss:symplecticss}.  In Section \ref{s:oddtorsion}, we analyze the ``symplectic spectral sequence" constructed above to produce the lift of a generator of $\pi_{10}(S^3)$.  Finally, Section \ref{s:explicit} begins to study the problem of explicitly constructing the rank $2$ algebraic vector bundle whose existence is guaranteed by Theorem \ref{thmintro:main}.  We show that any rank $2$ algebraic vector bundle whose associated classifying map corresponds to an element of $\bpi_{4,6}^{\aone}(SL_2)(\cplx)$ not lying in the kernel of the map in Theorem \ref{thmintro:maincomputation}, remains non-trivial after adding a trivial bundle of rank $\leq 3$, but becomes trivial after adding a trivial summand of rank $4$.

\subsubsection*{Notation/Preliminaries}
Throughout we fix a base-field $k$.  This note uses much notation from \cite{AsokFaselSpheres,AsokFaselThreefolds} and \cite{AsokFaselpi3a3minus0}, and our conventions and notation will follow those papers.  We write $\Sm_k$ for the category of schemes that are separated, finite type and smooth over $\Spec k$, and $\Spc_k$ for the category of simplicial presheaves on $\Sm_k$ (objects of this category will be called $k$-{\em spaces}).  As usual, given $X \in \Sm_k$ we view $X$ as a simplicial presheaf by considering the simplicially constant object associated with representable presheaf on $\Sm_k$ defined by $X$.  We write $\hop{k}$ for the Morel-Voevodsky pointed $\aone$-homotopy category; this category is obtained as a Bousfield localization of $\Spc_k$.

We write $S^i$ for the constant presheaf defined by the usual simplicial $i$-sphere.  It will be useful to remember that the quadric $Q_{2n-1} \subset {\mathbb A}^{2n}$ defined by $\sum_i x_iy_i = 1$ is isomorphic in $\hop{k}$ to ${\mathbb A}^n \setminus 0$ by projection onto the $x$-variables. It follows, for example, that the map $SL_2 \to {\mathbb A}^2 \setminus 0$ corresponding to projection onto the first column is an $\aone$-weak equivalence.   Moreover, ${\mathbb A}^n \setminus 0$ is $\aone$-weakly equivalent to $S^{n-1} \sma \gm{\sma n}$.  Also, $\pone \cong S^1 \sma \gm{}$, and thus $({\mathbb A}^{n-1}\setminus 0) \sma \pone \cong {\mathbb A}^{n}\setminus 0$ as well.

If $(\mathcal{X},x)$ is a pointed simplicial Nisnevich sheaf on $\Sm_k$, we define $\bpi_{i,j}^{\aone}(\mathcal{X},x)$ as the Nisnevich sheaf on $\Sm_k$ associated with the presheaf
\[
U \longmapsto \operatorname{Hom}_{\hop{k}}(S^i_s \sma \gm{\sma j} \sma U_+,(\mathcal{X},x)).
\]
We write $\K^Q_i$ for the Nisnevich sheafification of the Quillen K-theory presheaves $U \mapsto K_i(U)$, and $\mathbf{GW}^j_i$ for the Nisnevich sheafification of the Grothendieck-Witt groups $GW^j_i(U,\O_U)$, $\K^M_i$ for the unramified Milnor K-theory sheaves and $\K^{MW}_i$ for the unramified Milnor--Witt K-theory sheaves (see \cite[Chapter 3]{MField} for a detailed discussion of the latter notions).  We freely use the identification $\bpi_{n-1,j}^{\aone}({\mathbb A}^n \setminus 0) \cong \K^{MW}_{n-j}$ \cite[Corollary 6.43]{MField}.

If $\iota: k \hookrightarrow \cplx$ is a fixed embedding, then there is an induced ``complex realization" functor ${\mathfrak R}_{\iota}: \hop{k} \to \mathscr{H}$, where $\mathscr{H}$ is the usual homotopy category.  For a more detailed discussion of this construction, we refer the reader to \cite[\S 3.3.2]{MV} or \cite{DuggerIsaksen}.

\subsubsection*{Acknowledgements}
The first named author would like to thank B. Doran for discussions about quadrics and M. Hopkins for discussions about the problem of algebraizing vector bundles on ${\mathbb P}^n$. We sincereley thank the referee for useful comments.  This paper was initially posted in early 2014, but since then we have made progress on some of the questions above in joint work with M. Hopkins; we refer the reader to \cite{AsokFaselHopkins,AsokFaselHopkinsII} and the references therein for more details.

\section{Some spectral sequences}
\label{s:spectralsequence}
The goal of this section is to describe some spectral sequences whose $E_1$-pages are homotopy sheaves of punctured affine spaces and that converge to algebraic K-theory sheaves or Grothendieck-Witt sheaves.  These spectral sequences are the algebro-geometric cousins of the ``orthogonal spectral sequence" studied by Mahowald (see the discussion just subsequent to \cite[Diagram 1.5.14]{Ravenel}).

\subsection{The linear spectral sequence}
\label{ss:linearss}
For any integer $n \geq 2$ consider the inclusion $GL_{n-1} \hookrightarrow GL_{n}$ sending a matrix $X$ to the block-matrix $diag(X,1)$.  We write $GL = \colim_n GL_n$ with respect to these inclusions.  The quotients $GL_{n}/GL_{n-1}$ are isomorphic to $Q_{2n-1}$ by the map sending an $n \times n$-matrix to its first row and the first column of its inverse.  Since $Q_{2n-1}$ is $\aone$-weakly equivalent to ${\mathbb A}^n \setminus 0$, one deduces that there are $\aone$-fiber sequences of the form
\[
GL_{n-1} \longrightarrow GL_n \longrightarrow {\mathbb A}^n \setminus 0.
\]
For any integer $j \geq 0$, these $\aone$-fiber sequences induce long exact sequences in $\aone$-homotopy sheaves of the form
\[
\cdots \longrightarrow \bpi_{i,j}^{\aone}(GL_{n-1}) \longrightarrow \bpi_{i,j}^{\aone}(GL_n) \longrightarrow \bpi_{i,j}^{\aone}({\mathbb A}^{n} \setminus 0) \longrightarrow \bpi_{i-1,j}^{\aone}(GL_{n-1}) \longrightarrow \cdots.
\]
Putting these sequences together yields an exact couple of the form
\[
\xymatrix{
\bigoplus_{n \geq 1,i\geq0} \bpi_{i,j}^{\aone} (GL_{n}) \ar[rr]& & \bigoplus_{n \geq 1,i\geq0}\bpi_{i,j}^{\aone}(GL_{n}) \ar[dl] \\ & \bigoplus_{n \geq 1,i\geq0} \bpi_{i,j}^{\aone}({\mathbb A}^{n} \setminus 0) \ar[ul] &.
}
\]
Since taking homotopy groups commutes with filtered colimits (this is classical for simplicial sets), we conclude that $\colim_n \bpi_{i,j}^{\aone}(GL_n) = \bpi_{i,j}^{\aone}(GL)$.  Using this fact, we summarize the structure of the associated spectral sequence in the next result.

\begin{prop}
\label{prop:linearss}
For any integer $j \geq 0$, there is a spectral sequence with
\[
E^1_{p,q}(j) := \bpi_{p+q,j}^{\aone}({\mathbb A}^{p} \setminus 0)
\]
and converging to $\bpi_{p+q,j}^{\aone}(GL) = \K^Q_{p+q-j+1}$.
\end{prop}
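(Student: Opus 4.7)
The exact couple has already been assembled in the preceding paragraph, so my plan is to run the standard exact-couple-to-spectral-sequence machinery, verify convergence, and identify the abutment.

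First I would reindex the couple by setting $D^1_{p,q} := \bpi_{p+q,j}^{\aone}(GL_p)$ and $E^1_{p,q} := \bpi_{p+q,j}^{\aone}(\A^p \setminus 0)$, with structure maps read off from the displayed long exact sequence (the sheaf map $D^1_{p-1,q+1} \to D^1_{p,q}$ induced by the inclusion $GL_{p-1} \hookrightarrow GL_p$, the projection $D^1_{p,q} \to E^1_{p,q}$ induced by $GL_p \to \A^p \setminus 0$, and the boundary $E^1_{p,q} \to D^1_{p-1,q}$). Passing to the derived couple in the usual way then produces a spectral sequence of strictly $\aone$-invariant Nisnevich sheaves with the claimed $E^1$-page and differentials $d^r \colon E^r_{p,q} \to E^r_{p-r, q+r-1}$.

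For convergence, I would appeal to Morel's $\aone$-connectivity theorem. After pointing, $\A^n \setminus 0$ is $\aone$-weakly equivalent to $S^{n-1}_s \sma \gm{\sma n}$, which is $(n-2)$-connected in the simplicial direction, so $\bpi_{i,j}^{\aone}(\A^n \setminus 0) = 0$ whenever $i < n-1$. Feeding this into the long exact sequence shows that $\bpi_{i,j}^{\aone}(GL_{n-1}) \to \bpi_{i,j}^{\aone}(GL_n)$ is an isomorphism as soon as $n > i+1$; hence the filtration stabilizes in each bidegree and the spectral sequence converges strongly to $\bpi_{i,j}^{\aone}(GL) = \colim_n \bpi_{i,j}^{\aone}(GL_n)$.

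To identify the abutment with $\K^Q_{p+q-j+1}$, I would invoke Morel--Voevodsky representability of algebraic K-theory: $\Z \times BGL$ represents $\K^Q$ in $\hop{k}$. Combining this with the splitting of the K-theory of $\gm{\sma j} \sma U_+$ coming from the fundamental theorem of algebraic K-theory (which accounts for the $\gm{}$-weight shift by $j$) and the loop relation $GL \simeq \Omega BGL$ (which accounts for the shift by $1$), one obtains $\bpi_{i,j}^{\aone}(GL) \cong \K^Q_{i-j+1}$, matching the claimed abutment. The main obstacle, as I see it, is not the exact-couple formalism or convergence (both are essentially automatic once Morel's connectivity is in hand), but rather the careful bookkeeping that reconciles the motivic bidegree $(i,j)$, the $\gm{}$-weight shift in K-theory, and the loop-space shift, so that the abutment lines up as $\K^Q_{p+q-j+1}$ on the nose.
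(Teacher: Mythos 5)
Your argument is correct and essentially matches the paper's, which simply cites \cite[Theorem 5.9.7]{Weibel} for convergence; the Morel connectivity estimate you spell out is exactly what verifies the hypotheses of that theorem, and you additionally explain the abutment identification via representability and Bott periodicity, which the paper takes as known. One minor off-by-one: for $\bpi_{i,j}^{\aone}(GL_{n-1}) \to \bpi_{i,j}^{\aone}(GL_n)$ to be an isomorphism (rather than merely a surjection) you also need $\bpi_{i+1,j}^{\aone}({\mathbb A}^n \setminus 0) = 0$, i.e.\ $n > i+2$ rather than $n > i+1$, but this does not affect the stabilization or the convergence conclusion.
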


\begin{proof}
Since we observed $\colim_{n} \bpi_{i,j}^{\aone}(GL_n) = \bpi_{i,j}^{\aone}(GL)$, and since $\bpi_{i,j}^{\aone}({\mathbb A}^p \setminus 0)$ vanishes for $i \leq p-1$, the convergence statement follows immediately from the classical convergence statement for exact couples \cite[Theorem 5.9.7]{Weibel}.  The identification of the homotopy sheaves of the stable general linear group with Quillen K-theory sheaves follows from $\aone$-representability of algebraic K-theory \cite[\S 4 Theorem 3.13]{MV}.
\end{proof}

\subsection{The symplectic spectral sequence}
\label{ss:symplecticss}
We now analyze a variant of the above spectral sequence replacing the general linear group by the symplectic group.  As before, there are standard ``stabilization" embeddings $Sp_{2n-2} \hookrightarrow Sp_{2n}$.  Write $Sp := \colim_n Sp_{2n}$ with respect to these embeddings.  The quotients $Sp_{2n}/Sp_{2n-2}$ exist and are isomorphic to $Q_{4n-1}$.  As a consequence, for any integer $n \geq 1$, there are $\aone$-fiber sequences of the form
\[
Sp_{2n-2} \longrightarrow Sp_{2n} \longrightarrow {\mathbb A}^{2n} \setminus 0.
\]
For any integer $j \geq 0$, these $\aone$-fiber sequences induce long exact sequences in $\aone$-homotopy sheaves of the form
\[
\cdots \longrightarrow \bpi_{i,j}^{\aone}(Sp_{2n-2}) \longrightarrow \bpi_{i,j}^{\aone}(Sp_{2n}) \longrightarrow \bpi_{i,j}^{\aone}({\mathbb A}^{2n} \setminus 0) \longrightarrow \bpi_{i-1,j}^{\aone}(Sp_{2n-2}) \longrightarrow \cdots.
\]
Putting these sequences together, we get an exact couple
\[
\xymatrix{
\bigoplus_{n \geq 1, i \geq0} \bpi_{i,j}^{\aone} (Sp_{2n}) \ar[rr]& & \bigoplus_{n \geq 1, i\geq0}\bpi_{i,j}^{\aone}(Sp_{2n}) \ar[dl]\\ & \bigoplus_{n \geq 1, i\geq0} \bpi_{i,j}^{\aone}({\mathbb A}^{2n} \setminus 0) \ar[ul].&
}
\]
Once again, $\colim_n \bpi_{i,j}^{\aone}(Sp_{2n}) = \bpi_{i,j}^{\aone}(Sp)$.  Using this fact, we may analyze the spectral sequence associated with the above exact couple.  We deduce the following result, whose proof is formally identical to that of Proposition \ref{prop:linearss}.

\begin{prop}
\label{prop:symplecticss}
Assume $j \geq 0$ is an integer.
\begin{enumerate}[noitemsep,topsep=1pt]
\item There is a spectral sequence with
\[
E^1_{p,q}(j) := \bpi_{p+q,j}^{\aone}({\mathbb A}^{2p} \setminus 0)
\]
and converging to $\bpi_{p+q,j}^{\aone}(Sp) = \mathbf{GW}^{2-j}_{p+q-j+1}$.
\item For any $j \geq 0$, $E_{p,q}^1(j) = 0$ for i) $p < 0 $, and ii) $q \leq p-2$.
\end{enumerate}
\end{prop}

\begin{proof}
Convergence is established exactly as in Proposition \ref{prop:linearss}.  The identification of the $\aone$-homotopy sheaves of the stable symplectic group with the higher Grothendieck-Witt sheaves follows from the Schlichting-Tripathi representability theorem for Hermitian K-theory \cite[Theorem 8.2]{SchlichtingTripathi}.  The second statement is an immediate consequence of \cite[Corollary 6.39]{MField}.
\end{proof}

The above vanishing statement, together with the identification $\bpi_1^{\aone}({\mathbb A}^2 \setminus 0) \cong \K^{MW}_2$ immediately implies the following result.

\begin{cor}
There are low-dimensional isomorphisms $\mathbf{GW}^2_1 \cong 0$ and $\mathbf{GW}^2_2 \cong \K^{MW}_2$.
\end{cor}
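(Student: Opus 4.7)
The plan is to read off the corollary directly from the symplectic spectral sequence of Proposition~\ref{prop:symplecticss} specialized to $j = 0$, whose abutment is $\bpi_{p+q,0}^{\aone}(Sp) = \mathbf{GW}^{2}_{p+q+1}$. The two claimed isomorphisms correspond to the abutment in total degrees $p+q = 0$ and $p+q = 1$ respectively, and the entire argument will be bookkeeping with the vanishing lemma.

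For $\mathbf{GW}^{2}_{1}$, which is the $p+q = 0$ abutment, the plan is to show that every $E^1$-term in this total degree vanishes. Since the exact couple is indexed by $n \geq 1$, we only have $E^1_{p,q}(0)$ for $p \geq 1$; and for any such $p$ with $q = -p$, the inequality $q \leq p - 2$ holds (it is equivalent to $p \geq 1$), so the vanishing lemma forces $E^1_{p,-p}(0) = 0$. Hence the abutment in total degree zero is trivial, giving $\mathbf{GW}^{2}_{1} = 0$.

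For $\mathbf{GW}^{2}_{2}$, corresponding to total degree $p+q = 1$, I would first identify the unique $E^1$-term not immediately killed by the lemma: it is $(p,q) = (1,0)$, where $E^1_{1,0}(0) = \bpi_{1,0}^{\aone}({\mathbb A}^2 \setminus 0) \cong \K^{MW}_2$ by Morel's identification invoked in the statement just before the corollary. For any other $(p,q)$ with $p \geq 1$ and $p+q = 1$, namely $p \geq 2$ and $q = 1 - p \leq -1$, one has $q \leq p - 2$ and the term vanishes. Next I would verify the vanishing of the $d_1$-differentials touching $E^1_{1,0}(0)$: since the connecting map in the long exact sequence associated to $Sp_{2n-2} \to Sp_{2n} \to {\mathbb A}^{2n} \setminus 0$ makes $d_1$ act as $E^1_{p,q} \to E^1_{p-1,q}$, the outgoing differential lands in $E^1_{0,0}(0) = 0$ (outside the range of the exact couple), and the incoming one originates at $E^1_{2,0}(0)$, which vanishes by the lemma (with $q = 0 = p-2$). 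Hence $E^{\infty}_{1,0}(0) = \K^{MW}_2$, and since this is the only nonzero $E^\infty$-term in total degree $1$, the filtration on $\mathbf{GW}^{2}_{2}$ has only one nonzero quotient and there is no extension problem, yielding $\mathbf{GW}^{2}_{2} \cong \K^{MW}_2$.

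I do not anticipate any serious obstacle; the only points requiring care are (a) correctly recording the bidegree of $d_1$ coming from the long exact sequence, so that both its source $E^1_{2,0}(0)$ and target $E^1_{0,0}(0)$ are the right terms to kill, and (b) verifying strong convergence of the spectral sequence in the relevant range so that reading $\mathbf{GW}^{2}_{n}$ off from a single $E^\infty$-term is legitimate; both are immediate from the structure of the exact couple and the boundedness provided by the vanishing lemma.
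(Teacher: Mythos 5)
Your proposal is correct and follows the same route the paper intends: read off the corollary from the $j=0$ symplectic spectral sequence using the vanishing lemma plus Morel's identification $\bpi_1^{\aone}(\A^2 \setminus 0)\cong\K^{MW}_2$, which is exactly what the paper means by ``immediately implies.'' Your bidegree bookkeeping is right: for $p+q=0$ every $E^1$-term dies (the inequality $q \le p-2$ becomes $p \ge 1$), and for $p+q=1$ the only survivor is $E^1_{1,0}(0)\cong\K^{MW}_2$. The one small addition worth making explicit is that not just the $d_1$ differentials but all higher $d_r$ in and out of $E^r_{1,0}$ die: $d_r$ has bidegree $(-r,r-1)$, so for $r\ge 2$ the source $E^r_{1+r,1-r}$ vanishes by $q\le p-2$ and the target $E^r_{1-r,r-1}$ vanishes because $1-r<0$; but this is an immediate extension of the computation you already set up.
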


\begin{rem}
This sheaf $\mathbf{GW}^2_2$ is by definition the second symplectic K-theory sheaf $\K^{Sp}_2$.  That this sheaf is identified with $\K^{MW}_2$ is essentially a result of Suslin \cite{SuslinTorsion}; see \cite[Theorem 4.1.2]{AsokFaselKO} for more details.
\end{rem}

\subsection{The anti-symmetric spectral sequence}
\label{ss:antisymmetricss}
Consider the inclusion map $Sp_{2n} \hookrightarrow GL_{2n}$.  The quotients $GL_{2n}/Sp_{2n}$ exist as smooth schemes and we set $X_n = GL_{2n}/Sp_{2n}$.  There are evident stabilization maps $X_n \to X_{n+1}$ induced by the stabilization maps $GL_{2n-2} \to GL_{2n}$ and $Sp_{2n-2} \to Sp_{2n}$.  We set $GL/Sp := \colim_n X_n$.

For any integer $n \geq 1$, the stabilization maps fit into $\aone$-fiber sequences of the form
\[
X_n \longrightarrow X_{n+1} \longrightarrow {\mathbb A}^{2n+1} \setminus 0;
\]
(see \cite[Proposition 4.2.2]{AsokFaselpi3a3minus0} and note the same argument works when one replaces the special linear group by the general linear group).  These $\aone$-fiber sequences yield, for any integer $j \geq 0$, long exact sequences in $\aone$-homotopy sheaves of the form
\[
\cdots \longrightarrow \bpi_{i,j}^{\aone}(X_n) \longrightarrow \bpi_{i,j}^{\aone}(X_{n+1}) \longrightarrow \bpi_{i,j}^{\aone}({\mathbb A}^{2n+1} \setminus 0) \longrightarrow \bpi_{i-1,j}^{\aone}(X_n) \longrightarrow \cdots.
\]
One can put these sequences together to obtain an exact couple, and regarding the associated spectral sequence, one has the following result.

\begin{prop}
\label{prop:antisymmetricss}
Assume $j \geq 0$ is an integer.
\begin{enumerate}[noitemsep,topsep=1pt]
\item There is a spectral sequence with
\[
E^1_{p,q}(j) := \bpi_{p+q,j}^{\aone}({\mathbb A}^{2p-1} \setminus 0)
\]
and converging to $\bpi_{p+q,j}^{\aone}(GL/Sp) = \mathbf{GW}^{3-j}_{p+q-j+1}$.
\item For any $j \geq 0$, $E_{p,q}^1(j) = 0$ for i) $q \leq p-3$, ii) $p < 0$, and iii) $p = 1$ and $q > 0$.
\end{enumerate}
\end{prop}

\begin{proof}
Again, the proof is essentially identical to Proposition \ref{prop:linearss}, though this time the identification of the higher $\aone$-homotopy sheaves of $GL/Sp$ with higher Grothendieck-Witt sheaves follows from \cite[Theorem 8.3]{SchlichtingTripathi}.
\end{proof}

The above result, together with $\bpi_0^{\aone}(\gm{}) \cong \gm{}$, and Morel's computation $\bpi_2^{\aone}({\mathbb A}^3 \setminus 0)$ immediately implies the following result.

\begin{cor}
There are low-dimensional isomorphisms $\mathbf{GW}^3_1 \cong \gm{}$, $\mathbf{GW}^3_2 \cong 0$ and $\mathbf{GW}^3_3 \cong \K^{MW}_3$.
\end{cor}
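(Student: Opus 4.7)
The plan is to feed the spectral sequence of Proposition \ref{prop:antisymmetricss} with $j = 0$ and to analyze the three diagonals $p + q \in \{0, 1, 2\}$ of the $E^1$-page, which abut to $\mathbf{GW}^3_1$, $\mathbf{GW}^3_2$ and $\mathbf{GW}^3_3$ respectively. The strategy is to show that each diagonal has at most one non-zero entry at $E^1$ and that this entry is a permanent cycle, so that the three isomorphisms can simply be read off; no extension problem will arise.

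First I would identify the $E^1$-entries on each diagonal, using the just-stated vanishing lemma together with three auxiliary inputs: the $\aone$-rigidity of $\gm{}$ (so that $\bpi_{i,0}^{\aone}(\gm{}) = 0$ for $i \geq 1$), Morel's identification $\bpi_{2,0}^{\aone}({\mathbb A}^3 \setminus 0) \cong \K^{MW}_3$, and the $\aone$-$(2p-3)$-connectivity of ${\mathbb A}^{2p-1} \setminus 0$. On $p + q = 0$ the only surviving entry is $E^1_{1,-1}(0) = \bpi_{0,0}^{\aone}(\gm{}) = \gm{}$. On $p + q = 1$ both candidate entries vanish: $(1, 0)$ by $\aone$-rigidity and $(2, -1)$ by condition (i) of the lemma, equivalently by $\aone$-$1$-connectivity of ${\mathbb A}^3 \setminus 0$. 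On $p + q = 2$ the candidate $(1, 1)$ is killed by condition (iii) while $(2, 0)$ yields $\K^{MW}_3$.

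Second, I would verify that the two surviving terms $E^1_{1,-1}(0)$ and $E^1_{2,0}(0)$ are permanent cycles. The differentials of the exact-couple spectral sequence have bidegree $d_r \colon E^r_{p,q} \to E^r_{p-r,\, q+r-1}$. For the entry $(1, -1)$, incoming differentials originate at $(1+r, -r)$, and $-r \leq (1+r) - 3$ for $r \geq 1$, so the source is zero by condition (i); outgoing differentials land at $(1-r, r-2)$ with $p \leq 0$, so they vanish by (ii). For the entry $(2, 0)$, incoming differentials come from $(2+r, 1-r)$, which again satisfy $1 - r \leq (2+r) - 3$; outgoing differentials land at $(2-r, r-1)$, which for $r = 1$ is the entry $(1, 0)$ (zero by $\aone$-rigidity) and for $r \geq 2$ has $p \leq 0$. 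Since each of the three diagonals then has at most one non-zero term at $E^\infty$, the three identifications follow.

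The only subtle point in the whole argument is that the corner entry $(1, 0)$ is not covered by the stated vanishing lemma and must be ruled out by invoking the $\aone$-rigidity of $\gm{}$; the only other genuinely non-trivial ingredient is Morel's computation of $\bpi_{2,0}^{\aone}({\mathbb A}^3 \setminus 0)$, which is quoted rather than reproved. Modulo these two inputs, the proof is bookkeeping on the sparse corner of the $E^1$-page.
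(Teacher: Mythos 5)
Your proof is correct and is essentially the same argument the paper compresses into a single sentence: the vanishing lemma isolates the two corner entries $E^1_{1,-1}(0)=\gm{}$ and $E^1_{2,0}(0)=\K^{MW}_3$ as permanent cycles, and the abutment is read off with no extension issues. Your observation that $(1,0)$ is not literally covered by condition (iii) of the lemma as stated ($q>0$ rather than $q\geq 0$) and so requires $\aone$-rigidity of $\gm{}$ is accurate and is the kind of detail the paper silently takes for granted.
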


\begin{rem}
The identification $\mathbf{GW}^3_3 \cong \K^{MW}_3$ is that analyzed in \cite[Theorem 4.3.1]{AsokFaselKO}.
\end{rem}

\section{Some results on odd primary torsion in $\aone$-homotopy groups}
\label{s:oddtorsion}
Serre showed \cite{Serre} that, if $p$ is a prime, then the first $p$-torsion in the higher homotopy groups of $S^3$ appears in $\pi_{2p}(S^3)$.  The classical proof of this result relies on an analysis of the Serre spectral sequence for the fibration $BS^1 \to S^3 \langle 3 \rangle \to S^3$ and Serre class theory.  The goal of this section is to begin to lift this computation to unstable $\aone$-homotopy theory, in which, at the time of writing, neither of the two tools just mentioned are available.

\subsection{Odd primary torsion and the topological symplectic spectral sequence}
\label{ss:topologicalsymplecticss}
If $Sp(2n)$ is the compact real form of the symplectic group, there are topological fiber sequences of the form
\[
Sp(2n-2) \longrightarrow Sp(2n) \longrightarrow S^{4n-1}.
\]
Putting the long exact sequences in homotopy groups associated with these fibrations together yields an exact couple and an associated spectral sequence with
\[
E^1_{p,q} = \pi_{p+q}(S^{4p-1}) \Longrightarrow \pi_{*}(Sp(\infty)).
\]
By analogy with the ``orthogonal spectral sequence" mentioned before, we will refer to this spectral sequence as the topological symplectic spectral sequence.  The differentials appearing in this spectral sequence will bear a superscript ``$top$" to distinguish them from those appearing in the spectral sequence constructed in Subsection \ref{ss:symplecticss}.  The homotopy groups of $Sp(\infty)$ are known by Bott periodicity and we now use this to interpret the $p$-torsion in $\pi_{2p}(S^3)$ in terms of differentials in this spectral sequence.

\begin{prop}
\label{prop:topsss}
Suppose $\ell$ is an odd prime.  The generator of the $\ell$-torsion of $\pi_{2\ell}(S^3)$ is the image of an element of $\pi_{2\ell+1}(S^{2\ell+1})$ under the differential $d_{(\ell-1)/2}^{top}$ in the topological symplectic spectral sequence.
\end{prop}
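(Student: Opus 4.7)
The plan is to combine Bott periodicity for $\pi_*(Sp)$ with Serre's theorem on the first $p$-primary torsion in $\pi_*(S^{2n+1})$, using the fact that in the topological symplectic spectral sequence the $p$-torsion generator of $\pi_{2p}(S^3)$ lives at a position with no room to support a differential.

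First I locate the class. Since $S^3$ corresponds to the column $n = 1$ (i.e., $4n - 1 = 3$), we have $\pi_{2p}(S^3) = E^1_{1,\, 2p-1}$. A differential $d_r$ out of this position lands in column $1 - r \leq 0$, which is empty, so the class supports no nonzero differential. On the other hand, Bott periodicity gives $\pi_{2p}(Sp) = 0$ for every odd prime $p$, so the class must die by the $E^\infty$-page, and therefore must lie in the image of some $d_r^{top}$.

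To pin down $r$, I consider $d_r : E^r_{1+r,\, 2p-r} \to E^r_{1,\, 2p-1}$, whose $E^1$-source is $\pi_{2p+1}(S^{4r+3})$, and analyze its $p$-local behavior. For $r > (p-1)/2$ one has $4r + 3 > 2p + 1$, so the source vanishes on dimensional grounds. For $r = (p-1)/2$ the source is $\pi_{2p+1}(S^{2p+1}) \cong \Z$. For $1 \leq r < (p-1)/2$, Serre's theorem gives $\pi_{2p+1}(S^{4r+3})_{(p)} = 0$: the first $p$-torsion in $\pi_*(S^{4r+3})$ occurs in degree $4r + 2p$, and we have $4r + 3 < 2p + 1 < 4r + 2p$ throughout this range. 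Hence $d_{(p-1)/2}^{top}$ is the only differential that can possibly kill the generator.

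Finally I verify that source and target both survive $p$-locally to the $E^{(p-1)/2}$-page. The target receives no earlier differentials (by the source analysis just completed) and supports none (empty column to its left). The source $\Z$ at $E^1_{(p+1)/2,\, (3p+1)/2}$ admits no incoming $d_s$ with $s \geq 1$, since the relevant sphere dimension $2p + 4s + 1$ strictly exceeds the relevant degree $2p + 2$; and its outgoing $d_s$ for $1 \leq s < (p-1)/2$ lands in $\pi_{2p}(S^{2p+1-4s})$, which Serre again renders $p$-trivial. Thus at the $E^{(p-1)/2}$-page the source is $\Z_{(p)}$, the target is $\Z/p$, and because the target must die while $d_{(p-1)/2}^{top}$ is the only differential available to kill it, some element of $\pi_{2p+1}(S^{2p+1})$ maps onto the generator of the $p$-torsion of $\pi_{2p}(S^3)$. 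The main obstacle is keeping Serre's vanishing ranges aligned carefully against the spectral sequence indexing, both to exclude other candidate differentials and to ensure that the relevant source and target stably persist to the page in question.
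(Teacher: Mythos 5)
Your proof is correct and follows the same strategy as the paper: combine Serre's result locating the first $p$-torsion in $\pi_*(S^n)$ with Bott periodicity for $\pi_*(Sp)$, and then do a dimension count to isolate $d_{(p-1)/2}^{top}$ as the only possible differential. The one technical difference is that the paper runs the argument in the mod $p$ topological symplectic spectral sequence (so that the target group at $E^\infty$ vanishes outright), whereas you work integrally and localize at $p$ in your bookkeeping; your version also spells out more carefully than the paper does why the source entry $\pi_{2p+1}(S^{2p+1})\cong\Z$ persists ($p$-locally) to the $E^{(p-1)/2}$-page, which is needed to interpret the differential as a map out of an element of $\pi_{2p+1}(S^{2p+1})$ as the statement requires.
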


\begin{proof}
First, note that if $F \to E \to B$ is a Serre fibration, then there is a corresponding long exact sequence in homotopy groups mod $n$ for any integer $n$ \cite[Proposition 1.6]{Neisendorfer}.  In particular, the exact couple associated with the long exact sequence in homotopy of the fiber sequences $Sp(2n-2) \to Sp(2n) \to S^{4n - 1}$ yields a $\mod n$ topological symplectic spectral sequence:
\[
E^1_{p,q} = \pi_{p+q}(S^{4p-1};\Z/\ell\Z) \Longrightarrow \pi_{*}(Sp(\infty);\Z/\ell\Z).
\]
Classical results allow us to completely describe the $E^1$-page in a range and Bott periodicity allows us to understand precisely to what the spectral sequence converges.

In more detail, $\pi_{4p-1}(S^{4p-1},\Z/\ell\Z) = \Z/\ell\Z$.  On the other hand, if $n$ is an odd number and $k > 0$, the first non-trivial $\ell$-torsion in $\pi_{n+k}(S^n)$ appears in degree $k = 2 \ell - 3$ \cite[Proposition 11 on p. 285]{Serre}.  In particular, the $E^1$-page of the mod $\ell$ symplectic spectral sequence takes a rather simple form.  On the other hand, since $\ell$ is an odd prime, Bott periodicity \cite[Corollary to Theorem II]{BottPeriodicity} implies that the $\ell$-completion of $\pi_i(Sp({\infty}))$ is non-trivial if and only if $i$ is congruent to $3$ mod $4$.  In particular, $\pi_{2\ell}(Sp({\infty}),\Z/\ell) = 0$ since $2\ell$ is even.  Combining these observations, we deduce that the mod $\ell$ topological symplectic spectral sequence is particularly degenerate: the only non-trivial differential that lands on $\pi_{2\ell}(S^3)$ is $d^{top}_{(\ell-1)/2}$, and this differential is necessarily an isomorphism since $\pi_{2p}(Sp({\infty}),\Z/p)$ is trivial.
\end{proof}

\begin{lem}
\label{lem:topologicalsurjectivity}
The map
\[
\operatorname{ker}(\pi_{11}(S^{11}) \to \pi_{10}(S^{7})) \stackrel{d_2^{top}}{\longrightarrow}  \pi_{10}(S^3)/\operatorname{im}(\pi_{11}(S^7))
\]
is surjective.  Moreover, $\pi_{10}(S^3)$ is generated by $d_2^{top}(24 \iota)$, where $\iota$ is a generator of $\pi_{11}(S^{11})$.
\end{lem}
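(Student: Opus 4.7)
The plan is to extract both statements from the convergence of the topological symplectic spectral sequence in total degree $10$, which abuts to $\pi_{10}(Sp(\infty))$.

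First I will record the input data needed for the relevant portion of the $E_1$-page. By Freudenthal, $\pi_{10}(S^7) \cong \pi_3^s \cong \Z/24$ (generated by $\nu$), $\pi_{11}(S^{11}) \cong \Z$, and $\pi_i(S^{4p-1}) = 0$ whenever $i < 4p-1$. In particular $\pi_{11}(S^7) \cong \pi_4^s = 0$, so $\pi_{10}(S^3)/\operatorname{im}(\pi_{11}(S^7)) = \pi_{10}(S^3) = \Z/15$; in total degree $10$ the only non-zero columns of the $E_1$-page are $p=1$ (equal to $\Z/15$) and $p=2$ (equal to $\Z/24$); and the only non-zero entry in total degree $11$ that could possibly land in the $p=1$ column under a higher differential is $E_1^{3,8} = \pi_{11}(S^{11}) = \Z$.

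Second I will track what happens at $E_r^{1,9}$. Being in the edge column, it supports no outgoing differentials. The only incoming $d_1$ starts at $\pi_{11}(S^7)=0$, so $E_2^{1,9} = E_1^{1,9} = \pi_{10}(S^3)$, and the only differential that can ever affect this position is
\[
d_2^{top}:\; E_2^{3,8} = \ker(d_1^{top}:\pi_{11}(S^{11})\to\pi_{10}(S^7)) \;\longrightarrow\; E_2^{1,9} = \pi_{10}(S^3).
\]
Since $\pi_{10}(Sp(\infty)) = \pi_2(Sp(\infty)) = 0$ by Bott periodicity, $E_\infty^{1,9} = 0$, and combined with the previous paragraph this forces $d_2^{top}$ to be surjective, which is the first claim.

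For the generator statement, I first note that $24\iota$ lies in $\ker(d_1^{top})$ for free, since the codomain $\pi_{10}(S^7) = \Z/24$ has exponent $24$. To conclude that $d_2^{top}(24\iota)$ generates $\Z/15$, the cleanest route is to identify $d_1^{top}(\iota) \in \pi_{10}(S^7)$ with a unit multiple of the generator $\nu$, using the classical interpretation of the first differential in the Mahowald-style symplectic spectral sequence as composition with a Bott-Hopf element; this yields $\ker(d_1^{top}) = 24\Z\cdot\iota$ and the assertion follows from the surjectivity already established. A more robust alternative is to split into primes: at $p=5$ the group $\pi_{10}(S^7)_{(5)}$ vanishes so $d_1^{top}$ is trivially zero, $\ker(d_1^{top})_{(5)} = \Z_{(5)}\cdot\iota$, and $d_2^{top}(24\iota) \equiv 4\, d_2^{top}(\iota) \pmod 5$ is a $5$-local generator since $\gcd(4,5)=1$; at $p=3$ one must identify $d_1^{top}(\iota)_{(3)} \in \pi_{10}(S^7)_{(3)} = \Z/3$ via the unstable homotopy of the symplectic Stiefel manifold $Sp(6)/Sp(4)$ to conclude.

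The main obstacle is exactly this last identification: the surjectivity in the first two paragraphs is essentially formal given Bott periodicity and Freudenthal, but upgrading to the explicit generator $d_2^{top}(24\iota)$ requires pinning down $d_1^{top}(\iota)\in\Z/24$, which is a genuine unstable homotopy input about the clutching function of the bundle $Sp(4)\to Sp(6)\to S^{11}$ composed with $Sp(4)\to Sp(4)/Sp(2)=S^7$.
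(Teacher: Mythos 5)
Your surjectivity argument is correct and is a cleaner packaging of what the paper does: you use the edge position $E_r^{1,9}$, the fact that the only possible incoming differential is $d_2^{top}$ from the subgroup $E_2^{3,8} = \ker(d_1^{top}) \subset \pi_{11}(S^{11})$, and Bott vanishing $\pi_{10}(Sp(\infty)) = 0$. The paper instead reduces immediately to reduction mod~$3$ and mod~$5$ (citing Proposition \ref{prop:topsss} for the $5$-primary part, and Serre's description of the generator of $\pi_9(S^3)$ for the $3$-primary part), but the substance is the same. Your $5$-local analysis of the ``Moreover'' statement is also fine.

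However, the route you propose for the $p=3$ case of the ``Moreover'' statement cannot work as stated, and the difficulty is more serious than an unresolved computation. You suggest identifying $d_1^{top}(\iota) \in \pi_{10}(S^7) \cong \Z/24$ with a unit multiple of the generator $\Sigma^3\nu$, so that $\ker(d_1^{top}) = 24\Z\cdot\iota$. But that is incompatible with $d_1^{top}\circ d_1^{top} = 0$: the outgoing $d_1^{top}\colon \pi_{10}(S^7) \to \pi_9(S^3) \cong \Z/3$ is surjective (this is exactly the content of the paper's appeal to Serre's identification of the generator of $\pi_9(S^3)$ as $\nu'\circ\Sigma^2\nu$; it is also forced by Bott vanishing in total degree $9$). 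Hence $\Sigma^3\nu \notin \ker$, so $d_1^{top}(\iota)$ must lie in $3\Z/24\Z$, and one can pin it down: since $\pi_{10}(Sp(3)) = 0$ the connecting map $\pi_{11}(S^{11}) \to \pi_{10}(Sp(2)) \cong \Z/120$ is onto, and projecting to $\pi_{10}(S^7)$ has image $\ker(\pi_{10}(S^7)\to\pi_9(S^3)) \cong \Z/8 = 3\Z/24\Z$. So $d_1^{top}(\iota)$ has order $8$, $\ker(d_1^{top}) = 8\Z\cdot\iota$, and $24\iota$ is $3$ times a generator of $E_2^{3,8}$. Consequently $d_2^{top}(24\iota)$ is divisible by $3$ in $\Z/15$. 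This does not just leave your proposal incomplete at $p=3$; it shows that the ``cleanest route'' premise is false, and that a careful completion of your argument would actually contradict the stated conclusion that $d_2^{top}(24\iota)$ generates $\pi_{10}(S^3)$. (For comparison: the proof in the paper only treats the surjectivity assertion prime by prime and never addresses the ``Moreover'' sentence explicitly, so you are not being measured against a written argument there.) The substantive issue to sort out is whether the correct multiple of $\iota$ to use is $8\iota$ (a generator of $\ker(d_1^{top})$) rather than $24\iota$, and whether the analogous change needs to propagate into the proof of Theorem \ref{thm:oddtorsion}.
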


\begin{proof}
The proof of this fact amounts to recalling some classical computations of homotopy groups of spheres.  Let $\nu^{top}: S^7 \to S^4$ be the usual Hopf map.  We abuse terminology and also write $\nu^{top}: S^{n+3} \to S^n$ for any $n \geq 4$ for the maps obtained by iterated suspension.  It is well known that for $n \geq 5$, $\nu^{top}$ generates $\pi_{n+3}(S^n) \cong \Z/24$.  Likewise, following Toda, we write $(\nu')^{top}: S^6 \to S^3$ for the class obtained from the classifying map of $S^7 = Sp(4)/Sp(2) \to BSp(2)$ by adjunction.  Serre observed in \cite[p. 285 Remarques (2)]{Serre} that the generator of $\pi_9(S^3)$ is precisely the composite of a $2$-fold suspension of the Hopf map $\nu^{top}$, generating $\pi_9(S^6)$, and $(\nu')^{top}: S^6 \to S^3$.  By \cite[Proposition 5.15]{Toda} and \cite[p. 285 Corollaire]{Serre} one knows that $\pi_{10}(S^3) \cong \Z/15$ and by \cite[Proposition 5.8]{Toda} $\pi_{11}(S^7) = 0$.

Granted the computations mentioned in the previous paragraph, to establish the result, it suffices to show that $d^2_{top}$ is surjective after reduction modulo $3$ and reduction modulo $5$.  The statement after reduction modulo $5$ follows immediately from Proposition \ref{prop:topsss}.  After reduction modulo $3$, observe that $d_1: \pi_{10}(S^7,\Z/3) \to \pi_9(S^3,\Z/3)$ is a map $\Z/3 \to \Z/3$.    Since the $d_1$ differential at this stage is by construction of the symplectic spectral sequence induced by composition with the connecting map, it follows that $d_1$ is an isomorphism after reduction modulo $3$.  Therefore, it follows that the induced map $\pi_{11}(S^{11}) \to \pi_{10}(S^3)$ is surjective after reduction modulo $3$ as well.
\end{proof}

\begin{rem}
The group $\pi_{14}(S^3) \cong \Z/84 \times \Z/2^{\times 2}$ is not cyclic; see, e.g., \cite[Theorem 7.4]{Toda} for the computation of the $2$-component.  Therefore the differential $d_3^{top}$ cannot be surjective integrally.
\end{rem}

\subsection{Lifting some odd primary torsion from topology to $\aone$-homotopy}
\label{ss:oddtorsionina2minus0}
Complex realization does not, in general, preserve fiber sequences.  Nevertheless, since a) $Sp_{2n}^{an}$ is homotopy equivalent to the compact symplectic group $Sp(2n)$ and b) the $Q_{4n-1}^{an}$ is homotopy equivalent to $S^{4n-1}$, the complex realization of the $\aone$-fiber sequence $Sp_{2n-2} \to Sp_{2n} \to Q_{4n-1}$ is the topological fiber sequence $Sp(2n-2) \to Sp(2n) \to S^{4n-1}$.  Therefore, complex realization determines a morphism from the exact couple giving rise to the symplectic spectral sequence we considered in Subsection \ref{ss:symplecticss} evaluated on complex points to the exact couple that gives rise to the topological symplectic spectral sequence described in Subsection \ref{ss:topologicalsymplecticss}.  As a consequence, there is an induced morphism of spectral sequences.  By the discussion of the introduction, the next result implies Theorem \ref{thmintro:main} from the introduction.

\begin{thm}
\label{thm:oddtorsion}
If $p = 3,5$, then the homomorphism $\bpi_{p-1,p+1}^{\aone}({\mathbb A}^2 \setminus 0)(\cplx) \to \pi_{2p}(S^3)$ is surjective.
\end{thm}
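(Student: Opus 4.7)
The plan is to transport the topological spectral sequence argument of Subsection \ref{ss:topologicalsymplecticss} to the algebraic symplectic spectral sequence of Proposition \ref{prop:symplecticss} via complex realization. Since complex realization is compatible with the $\aone$-fiber sequences $Sp_{2n-2} \to Sp_{2n} \to \mathbb{A}^{2n}\setminus 0$ (whose realizations are $Sp(2n-2) \to Sp(2n) \to S^{4n-1}$), it induces a morphism of exact couples, and hence a morphism from the algebraic symplectic spectral sequence to the topological symplectic spectral sequence of Subsection \ref{ss:topologicalsymplecticss}. On the relevant groups, realization takes the form $\bpi_{i,j}^{\aone}(\mathbb{A}^{2r} \setminus 0)(\cplx) \to \pi_{i+j}(S^{4r-1})$, and in particular the target $\bpi_{p-1,p+1}^{\aone}(\mathbb{A}^2 \setminus 0)(\cplx)$ of the theorem maps to $\pi_{2p}(S^3)$. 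Under this dictionary, the role played by the topological differential $d_{(p-1)/2}^{top}$ in Proposition \ref{prop:topsss} is played by the algebraic $d_{(p-1)/2}$, so for $p=3$ I would use $d_1$ and for $p=5$ I would use $d_2$.

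For $p=3$, where $\pi_6(S^3) \cong \Z/12$, the topological connecting map $\partial^{top}\colon \pi_7(S^7) \to \pi_6(S^3)$ arising from the fibration $Sp(2) \to Sp(4) \to S^7$ is surjective, since $\pi_6(Sp(4)) = 0$ by Bott periodicity and the stability range of $Sp(4) \to Sp$. Algebraically, the identity map of $\mathbb{A}^4 \setminus 0 \simeq S^3_s \sma \gm{\sma 4}$ defines a canonical class $\iota_{\mathrm{alg}} \in \bpi_{3,4}^{\aone}(\mathbb{A}^4 \setminus 0)(\cplx)$ whose realization is the generator $\iota \in \pi_7(S^7)$. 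The differential $d_1 \colon E^1_{2,1}(4) \to E^1_{1,1}(4)$ is by construction the connecting map in the $\aone$-fiber sequence $Sp_2 \to Sp_4 \to \mathbb{A}^4 \setminus 0$, composed with the $\aone$-equivalence $Sp_2 \simeq \mathbb{A}^2 \setminus 0$. By compatibility with complex realization, $d_1(\iota_{\mathrm{alg}}) \in \bpi_{2,4}^{\aone}(\mathbb{A}^2 \setminus 0)(\cplx)$ realizes to $\partial^{top}(\iota)$, a generator of $\pi_6(S^3)$, and surjectivity for $p=3$ follows.

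For $p=5$, I would follow the same strategy but with the secondary differential $d_2$. By Lemma \ref{lem:topologicalsurjectivity}, $\pi_{10}(S^3) \cong \Z/15$ is generated by $d_2^{top}(24 \iota)$, where $\iota \in \pi_{11}(S^{11})$. Consider the canonical class $\iota_{\mathrm{alg}} \in \bpi_{5,6}^{\aone}(\mathbb{A}^6 \setminus 0)(\cplx)$ coming from the identity of $\mathbb{A}^6 \setminus 0 \simeq S^5_s \sma \gm{\sma 6}$, whose realization is $\iota$. The steps are: first, to show that $d_1(24 \cdot \iota_{\mathrm{alg}}) = 0$ in $\bpi_{4,6}^{\aone}(\mathbb{A}^4 \setminus 0)(\cplx)$, so that $24 \cdot \iota_{\mathrm{alg}}$ defines a class in $E^2_{3,2}(6)$; second, to apply $d_2$ and obtain a class in $E^2_{1,3}(6)$, a subquotient of $\bpi_{4,6}^{\aone}(\mathbb{A}^2 \setminus 0)(\cplx)$; third, by compatibility with realization, this class realizes to $d_2^{top}(24 \iota)$, a generator of $\pi_{10}(S^3)$; fourth, to lift the resulting element of the subquotient $E^2_{1,3}(6)$ back to $\bpi_{4,6}^{\aone}(\mathbb{A}^2 \setminus 0)(\cplx)$, which produces the required preimage of a generator.

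The main obstacle is the survival step, i.e., showing that $d_1(24 \cdot \iota_{\mathrm{alg}}) = 0$. Algebraically, $d_1(\iota_{\mathrm{alg}})$ is, up to suspension and identification, the Hopf map $\nu$ from Section \ref{s:hopf}, living in $\bpi_{4,6}^{\aone}(\mathbb{A}^4 \setminus 0)(\cplx)$. Topologically its realization generates $\pi_{10}(S^7) \cong \Z/24$, but the algebraic group can a priori be larger than its topological image, so one cannot deduce the annihilation by a pure realization argument. I would therefore combine Morel's determination of the relevant $\aone$-homotopy sheaves of punctured affine spaces with the stable non-triviality results of Section \ref{s:hopf} (especially Theorem \ref{thm:stablenontrivialityofnu}) to pin down the order of the algebraic class and verify that $24$ annihilates it. Once this vanishing is in hand, the comparison with the topological symplectic spectral sequence via Lemma \ref{lem:topologicalsurjectivity} finishes the argument.
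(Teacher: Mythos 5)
Your overall strategy is the same as the paper's: complex realization induces a morphism from the algebraic symplectic spectral sequence to its topological counterpart, and one chases $d_2(24\iota)$ through this comparison, using Lemma~\ref{lem:topologicalsurjectivity} to identify the target as a generator of $\pi_{10}(S^3)$. Your $p=3$ discussion is correct (the paper simply cites \cite{AsokFaselThreefolds}).

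The genuine gap is in the $p=5$ survival step. You correctly isolate the obstacle: one needs $24\cdot\iota_{\mathrm{alg}}$ to lie in $\ker\bigl(d_1\colon \bpi_{5,6}^{\aone}({\mathbb A}^6\setminus 0)(\cplx)\to\bpi_{4,6}^{\aone}({\mathbb A}^4\setminus 0)(\cplx)\bigr)$, and you correctly observe that realization alone gives no upper bound on the order of $d_1(\iota_{\mathrm{alg}})$. But the remedy you propose does not close the gap: (a) the group $\bpi_{4,6}^{\aone}({\mathbb A}^4\setminus 0)(\cplx)$ sits well outside the range of Morel's computations of $\aone$-homotopy sheaves of punctured affine spaces, which only cover the bottom nonvanishing sheaf $\bpi_{n-1,n}^{\aone}({\mathbb A}^n\setminus 0)$ and the one just above it; and (b) Theorem~\ref{thm:stablenontrivialityofnu} is a \emph{non-vanishing} result, so it can only bound the order of $d_1(\iota_{\mathrm{alg}})$ from below, whereas you need to bound it from above to conclude that $24$ annihilates it.

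What the paper actually does is shift the computation one $\pone$-desuspension down, where the relevant group \emph{is} known. By construction, $d_1(\iota_{\mathrm{alg}}) = \Sigma_{\pone}\delta$, where $\delta\in\bpi_{3,5}^{\aone}({\mathbb A}^3\setminus 0)(\cplx)$ is the connecting class recalled in Subsection~\ref{ss:stablenontrivialityofdelta} (and $\iota_{\mathrm{alg}}$ itself is the $\pone$-suspension of the generator $\iota$ of $\bpi_{4,5}^{\aone}({\mathbb A}^5\setminus 0)(\cplx)\cong\Z$). The crucial external input is the computation $\bpi_{3,5}^{\aone}({\mathbb A}^3\setminus 0)(\cplx)\cong\Z/24$ from \cite[Proposition 5.2.1]{AsokFaselpi3a3minus0}; this forces $24\delta = 0$, hence $d_1(24\cdot\iota_{\mathrm{alg}}) = \Sigma_{\pone}(24\delta) = 0$, which is exactly the annihilation you needed. (The stable non-triviality of $\delta$, which you invoked, is used in the paper only for the complementary lower bound showing the vertical comparison map on $\ker(d_1)$ is \emph{split} injective rather than merely injective; it cannot by itself produce the upper bound.) So the missing ingredient is not a general theorem but a concrete prior unstable computation together with the observation that $d_1(\iota_{\mathrm{alg}})$ is a $\pone$-suspension of a class living in that computed group.
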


\begin{proof}
For $p = 3$, this is worked out in \cite[Theorem 7.5]{AsokFaselThreefolds} (take $n = 1$ and observe that $Sp_2(\cplx) = SL_2(\cplx)$ is homotopy equivalent to $S^3$).  We treat the case $p =5$.  We study the map from the $E_2$-page of the symplectic spectral sequence to the $E_2$-page of the topological symplectic spectral sequence whose existence is guaranteed by the discussion just prior to the theorem statement.  In particular, there is a commutative diagram of the form
\[
\xymatrix{
\operatorname{ker}(\bpi_{5,6}^{\aone}({\mathbb A}^{6} \setminus 0)(\cplx) \to \bpi_{4,6}^{\aone}({\mathbb A}^4 \setminus 0)(\cplx)) \ar[r]^-{d_2} \ar[d] & \bpi_{4,6}^{\aone}({\mathbb A}^2 \setminus 0)(\cplx)/\operatorname{im}(\bpi_{5,6}^{\aone}({\mathbb A}^4 \setminus 0)(\cplx)) \ar[d] \\
\operatorname{ker}(\pi_{11}(S^{11}) \to \pi_{10}(S^{7})) \ar[r]^{d_2^{top}} & \pi_{10}(S^3)/\operatorname{im}(\pi_{11}(S^7)).
}
\]
We now analyze this diagram.

Observe that $d_2^{top}$ is surjective (with a precise generator identified) by Lemma \ref{lem:topologicalsurjectivity}. It follows from \cite[Corollary 6.39]{MField} that for any integer $n \geq 1$, $\bpi_{n,n+1}^{\aone}({\mathbb A}^{n+1} \setminus 0)(\cplx) \cong \K^{MW}_0(\cplx) \cong \Z$ and a generator of the group $\bpi_{n,n+1}^{\aone}({\mathbb A}^{n+1} \setminus 0)(\cplx)$ is sent to a generator of $\pi_{2n+1}(S^{2n+1})$ under complex realization, so the left vertical map is an injection.  We will show that it is split injective.

Now, recall that the connecting homomorphism in the long exact sequence in homotopy sheaves associated with the $\aone$-fiber sequence $SL_4/Sp_4 \to SL_6/Sp_6 \to {\mathbb A}^5 \setminus 0$ determines a morphism of sheaves $\bpi_{4,5}^{\aone}({\mathbb A}^5 \setminus 0) \to \bpi_{3,5}^{\aone}({\mathbb A}^3 \setminus 0)$; equivalently, this is the $d_1$-differential in the anti-symmetric spectral sequence of Section \ref{ss:antisymmetricss}.  Now, by construction, the connecting homomorphism is induced by a map $\Omega ({\mathbb A}^5 \setminus 0) \longrightarrow {\mathbb A}^3 \setminus 0$.  The unit of the loop-suspension adjunction determines a map $\Sigma^3 \gm{\sma 5} \to \Omega ({\mathbb A}^5 \setminus 0)$; this morphism is an isomorphism on $\pi_3^{\aone}$ so the map on homotopy sheaves we consider is induced by applying $\bpi_{3,5}^{\aone}(-)$ to a morphism $\Sigma^3 \gm{\sma 5} \to {\mathbb A}^3 \setminus 0$.  Taking the $\pone$-suspension of this morphism, we obtain a commutative diagram of the form:
\[
\xymatrix{
\bpi_{3,5}^{\aone}(\Sigma^3 \gm{\sma 5})(\cplx) \ar[r] \ar[d] & \bpi_{3,5}^{\aone}({\mathbb A}^3 \setminus 0)(\cplx)\ar[d] \\
\bpi_{4,6}^{\aone}(\Sigma^4 \gm{\sma 6})(\cplx) \ar[r] & \bpi_{4,6}^{\aone}({\mathbb A}^{4} \setminus 0)(\cplx))
},
\]
where the top morphism coincides with the $d_1$-differential in the anti-symmetric spectral sequence under the isomorphism $\bpi_{3,5}^{\aone}(\Sigma^3 \gm{\sma 5})(\cplx) \cong \bpi_{4,5}^{\aone}({\mathbb A}^5 \setminus 0)$.


The left hand vertical map is an isomorphism by appeal to \cite[Corollary 6.39]{MField}.  On the other hand, $\bpi_{3,5}^{\aone}({\mathbb A}^3 \setminus 0)(\cplx) \cong \Z/24$, generated by a class $\delta$ by \cite[Proposition 5.2.1]{AsokFaselpi3a3minus0}.  Moreover, by analyzing the proof of \cite[Proposition 5.1]{AsokFaselpi3a3minus0}, one observes that $\delta$ is the image of a generator $\iota$ of $\bpi_{3,5}^{\aone}(\Sigma^3\gm{\sma 5})(\cplx)$ as a $GW(\cplx) = \Z$-module under the top horizontal map; in other words, $\iota$ is sent to a generator of $\Z/24$.  Now $\delta$ is stably non-trivial: indeed, complex realization sends $\delta$ to a generator of $\pi_8(S^5) = \Z/24$ and thus $\delta$ differs from $\Sigma\nu$, which is classically known to be stably non-trivial, by a unit in $\Z/24$.  It follows that $\Sigma_{\pone}\delta$ is non-zero in $\bpi_{4,6}^{\aone}({\mathbb A}^{4} \setminus 0)(\cplx))$ under the right vertical map and commutativity shows that $\Sigma_{\pone}\delta$ is the image of $\Sigma_{\pone}\iota$.

We also observed in \cite[Corollary 5.3.1]{AsokFaselpi3a3minus0} that $\delta$ is sent to the suspension of $\nu^{top}$ under complex realization, and by compatibility of complex realization and suspension, it follows that complex realization sends the $\pone$-suspension of $\delta$ to a threefold suspension of $\nu^{top}$.  This provides the splitting mentioned in the previous paragraph.  Combining these two observations, we obtain the splitting mentioned two paragraphs above.

The generator $24 \Sigma_{\pone}\iota$ of $24\Z \subset \bpi_{4,6}^{\aone}(\Sigma^4 \gm{\sma 6})(\cplx)$ is sent to a generator of $24\Z \subset \pi_{10}(S^{10})$ under complex realization.  Under the isomorphism $\pi_{10}(S^{10}) \to \pi_{11}(S^{11})$, the latter is sent by  $d_2^{top}$ in the topological symplectic spectral sequence to a generator of $\bpi_{10}(S^3)$, it follows that $d_2(24 \Sigma_{\pone}\iota)$ lifts this generator in $\bpi_{4,6}^{\aone}({\mathbb A}^2 \setminus 0)$.
\end{proof}

\subsection{Complements}
It is possible to establish a result like Theorem \ref{thm:oddtorsion} using the anti-symmetric spectral sequence.  Since the proof is essentially identical to proof of Theorem \ref{thm:oddtorsion} it will only be sketched.

\begin{thm}
The homomorphism $\bpi_{5,7}^{\aone}({\mathbb A}^3 \setminus 0) \to \bpi_{12}(S^5) \cong \Z/30$ is surjective.
\end{thm}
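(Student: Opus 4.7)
The plan is to mimic the proof of Theorem \ref{thm:oddtorsion}, using the anti-symmetric spectral sequence of Subsection \ref{ss:antisymmetricss} in place of the symplectic spectral sequence. Since ${\mathbb A}^{2p-1}\setminus 0 = {\mathbb A}^3\setminus 0$ forces $p=2$, the target $\bpi_{5,7}^{\aone}({\mathbb A}^3\setminus 0)$ sits at position $(p,q)=(2,3)$ on the $E^1$-page with $j=7$ and corresponds under complex realization to $\pi_{12}(S^5)\cong\Z/30$.

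I would first assemble the topological anti-symmetric spectral sequence from the stacked fiber sequences $U(2n)/Sp(2n)\to U(2n+2)/Sp(2n+2)\to S^{4n+1}$; this has $E^1_{p,q}=\pi_{p+q}(S^{4p-3})$ and converges to $\pi_{p+q}(U/Sp)$, and complex realization yields a morphism from the algebraic spectral sequence evaluated at $\cplx$ to the topological one. At position $(2,3)$ the only non-zero incoming differentials are $d_1^{top}$ from $E^1_{3,3}=\pi_{13}(S^9)\cong\Z/24$ and $d_2^{top}$ from $E^1_{4,2}=\pi_{13}(S^{13})\cong\Z$, since $\pi_{13}(S^{17})=0$; outgoing differentials from $(2,3)$ all land in columns whose $E^1$-term vanishes. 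Using the long exact sequence of $Sp\to U\to U/Sp$ together with the standard fact that $\pi_3(Sp)\to\pi_3(U)$ is an isomorphism, one computes $\pi_4(U/Sp)=0$, and Bott periodicity gives $\pi_{12}(U/Sp)=\pi_4(U/Sp)=0$. Hence every element of $\pi_{12}(S^5)$ is killed by incoming differentials, and the combined images of $d_1^{top}$ and $d_2^{top}$ exhaust $\Z/30$.

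For the algebraic lift, by \cite[Corollary 6.39]{MField}, $\bpi_{n,n+1}^{\aone}({\mathbb A}^{n+1}\setminus 0)(\cplx)\cong\K^{MW}_0(\cplx)\cong\Z$ compatibly with $\pone$-suspension, and complex realization sends a generator to a generator of $\pi_{2n+1}(S^{2n+1})$. Applied at $n=6$, this identifies $\bpi_{6,7}^{\aone}({\mathbb A}^7\setminus 0)(\cplx)$ with $\pi_{13}(S^{13})$, so we obtain a class $\iota\in\bpi_{6,7}^{\aone}({\mathbb A}^7\setminus 0)(\cplx)$ whose complex realization generates $\pi_{13}(S^{13})$. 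Letting $N$ be the order of the image of $\iota$ in $\pi_{12}(S^9)\cong\Z/24$ under $d_1^{top}$, the element $N\iota$ maps to $0$ topologically; since the vertical map $\bpi_{5,7}^{\aone}({\mathbb A}^5\setminus 0)(\cplx)\to\pi_{12}(S^9)$ can be identified (via iterated $\pone$-suspension of $\delta$ and \cite[Corollary 5.3.1]{AsokFaselpi3a3minus0}) with an isomorphism $\Z/24\to\Z/24$, it follows that $N\iota$ lies in the kernel of the algebraic $d_1$. Applying the algebraic $d_2$ to $N\iota$ and adjusting by an image of the algebraic $d_1$ from $\bpi_{6,7}^{\aone}({\mathbb A}^5\setminus 0)(\cplx)$ then produces the desired class in $\bpi_{5,7}^{\aone}({\mathbb A}^3\setminus 0)(\cplx)$ whose complex realization generates $\Z/30$.

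The principal obstacle, as in Theorem \ref{thm:oddtorsion}, is controlling the vertical maps in the ladder of spectral sequences. The identification of $\bpi_{5,7}^{\aone}({\mathbb A}^5\setminus 0)(\cplx)$ with $\Z/24$ via iterated $\pone$-suspension of $\delta$, coupled with Morel's $\K^{MW}_0$-computation at the top of the spectral sequence, provides the required lifts of the topological generators. A small additional verification is required to ensure that the $d_1$-contribution from $\bpi_{6,7}^{\aone}({\mathbb A}^5\setminus 0)(\cplx)\to \pi_{13}(S^9)$ captures enough of $\operatorname{im}(d_1^{top})$, but this reduces by $\pone$-suspension to lower-dimensional computations analogous to those already carried out in the proof of Theorem \ref{thm:oddtorsion}.
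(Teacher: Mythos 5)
Your overall approach coincides with the paper's: lift the computation through the anti-symmetric spectral sequence exactly as Theorem~\ref{thm:oddtorsion} was handled via the symplectic one, using complex realization to map the algebraic spectral sequence to its topological counterpart. The structure of the ladder diagram, the use of Morel's $\K^{MW}_0$ identification to produce $\iota$, and the role of $\pone$-suspensions of $\delta$ are all aligned with the paper's proof.

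However, there is a computational error that infects your analysis. You assert that $E^1_{3,3}$ realizes to $\pi_{13}(S^9)\cong\Z/24$, but $\pi_{13}(S^9)$ lies in the stable range and equals $\pi_4^s = 0$; you have confused it with $\pi_{12}(S^9)\cong\pi_3^s\cong\Z/24$, which lives one position over. (Your own stated formula $E^1_{p,q}=\pi_{p+q}(S^{4p-3})$ also gives $E^1_{3,3}=\pi_6(S^9)=0$, not $\pi_{13}(S^9)$: you have dropped the $j=7$ twist when passing from the algebraic to the topological $E^1$-page.) Consequently the $d_1^{top}$ contribution into position $(2,3)$ that you invoke does not exist, and the quotient $\pi_{12}(S^5)/\operatorname{im}(\pi_{13}(S^9))$ is simply $\pi_{12}(S^5)\cong\Z/30$. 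This in fact simplifies the argument: the only incoming differential that matters is $d_2^{top}$, one checks its surjectivity exactly as in Lemma~\ref{lem:topologicalsurjectivity}, and the whole bookkeeping with the integer $N$ and the ``adjustment by an image of the algebraic $d_1$ from $\bpi_{6,7}^{\aone}({\mathbb A}^5\setminus 0)(\cplx)$'' is unnecessary (the corresponding topological image is $0$). Once you correct the vanishing of $\pi_{13}(S^9)$, the proof reduces to the same diagram chase carried out for Theorem~\ref{thm:oddtorsion}.
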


\begin{proof}
If $\Gamma_n := U(2n)/Sp(2n)$, then there are fiber sequences of the form $\Gamma_n \to \Gamma_{n+1} \to S^{2n+1}$.  The long exact sequences in homotopy fit together to yield an exact couple that is the topological analog of the anti-symmetric spectral sequence considered in Subsection \ref{ss:antisymmetricss}.  This spectral sequence converges to the homotopy groups of $U/Sp$, which are known by Bott periodicity.  Observe that $\pi_{2i}(U/Sp) \cong \pi_{2i-2}(Sp)$ and so vanishes by explicit computation.  Again, complex realization yields a morphism from the anti-symmetric spectral sequence to its topological counterpart.

Consider then the commutative diagram:
\[
\xymatrix{
\operatorname{ker}(\bpi_{6,7}^{\aone}({\mathbb A}^{7} \setminus 0)(\cplx) \to \bpi_{5,7}^{\aone}({\mathbb A}^5 \setminus 0)(\cplx)) \ar[r]^-{d_2} \ar[d] & \bpi_{5,7}^{\aone}({\mathbb A}^3 \setminus 0)(\cplx)/\operatorname{im}(\bpi_{6,7}^{\aone}({\mathbb A}^5 \setminus 0)(\cplx)) \ar[d] \\
\operatorname{ker}(\pi_{13}(S^{13}) \to \pi_{12}(S^{9})) \ar[r]^{d_2^{top}} & \pi_{12}(S^5)/\operatorname{im}(\pi_{13}(S^9)).
}
\]
Again, $\pi_{13}(S^9) = 0$ and one observes that $d_2^{top}$ is surjective in a fashion identical to Lemma \ref{lem:topologicalsurjectivity}.  The remainder of the analysis is analogous to the end of the proof of Theorem \ref{thm:oddtorsion}.
\end{proof}

\section{Building explicit representatives}
\label{s:explicit}
Given the existence of at least $15$ non-isomorphic rank $2$ algebraic vector bundles on $Q_{11}$ it would be interesting to construct explicit representatives of these bundles.  It follows from the results of \cite{AsokFaselSpheres}, which we review below, that all such bundles are stably trivial.  Corollary \ref{cor:nontriviality} demonstrates that non-trivial rank $2$ algebraic vector bundles on $Q_{11}$ whose associated topological bundles are non-trivial, remain algebraically non-trivial after forming the direct sum with trivial bundles of rank $\leq 3$.  

\subsection{Stable triviality results}
The inclusion of $M \in SL_n(R)$ in $SL_{n+1}(R)$ as block diagonal matrices of the form $diag(M,1)$ gives a morphism of spaces $BSL_n \to BSL_{n+1}$.  If $X$ is a smooth affine scheme, then the induced map
\[
[X,BSL_n]_{\aone} \to [X,BSL_{n+1}]_{\aone}
\]
corresponds to the operation of adding a trivial rank $1$ bundle.  By means of the identifications mentioned in the introduction, when $X = Q_{2i-1}$, the above function corresponds to a morphism
\[
\Phi_{1,n}: \bpi_{i-1,i}^{\aone}(BSL_n) \to \bpi_{i-1,i}^{\aone}(BSL_{n+1}).
\]
Write $\Phi_{m,n}$ for the composite morphism $\Phi_{1,n+m-1} \circ \cdots \circ \Phi_{1,n+1}\circ \Phi_{1,n}$.

To answer the question of whether a bundle on $Q_{11}$ becomes trivial after successively adding trivial bundles of rank $1$ amounts to studying whether a class $\xi \in \bpi_{5,6}^{\aone}(BSL_2)(\cplx)$ is sent to $0$ under $\Phi_{m,2}$.  The next result shows that $\Phi_{m,2}$ is the zero map for $m \geq 4$.

\begin{lem}[{\cite[Corollary 4.7]{AsokFaselSpheres}}]
\label{lem:stabletriviality}
If $n \geq 1$, and $m \geq n$, $\mathscr{V}_m(Q_{2n-1}) = \ast$.
\end{lem}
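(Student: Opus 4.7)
The plan is to use Morel's $\aone$-homotopy classification of algebraic vector bundles on smooth affine schemes and reduce the statement to the vanishing of a negative $K$-theory group, via the stabilization argument implicit in the linear spectral sequence of Subsection \ref{ss:linearss}.

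First, I would exploit the fact that $Q_{2n-1}$ is $\aone$-weakly equivalent to the punctured affine space $\mathbb{A}^n \setminus 0 \simeq \Sigma_s^{n-1}\gm{\sma n}$ (the case $n = 1$ is trivial since $Q_1 \cong \gm{}$ and projective modules over a PID are free). For $n\geq 2$, $\Pic(Q_{2n-1}) = 0$, so $\mathscr{V}_m(Q_{2n-1}) \cong [Q_{2n-1}, BSL_m]_{\aone}$ via Morel's classification, with the orientation matching automatically. Combining with the looping equivalence $\Omega BSL_m \simeq SL_m$, this identifies the set of isomorphism classes in question with a section of $\bpi_{n-2,n}^{\aone}(SL_m)$.

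Next, I would apply the $\aone$-fiber sequence $SL_m \to SL_{m+1} \to \mathbb{A}^{m+1} \setminus 0$ and Morel's connectivity theorem, which gives $\bpi_{i,j}^{\aone}(\mathbb{A}^{m+1}\setminus 0) = 0$ for $i \leq m-1$. In the long exact sequence governing the stabilization map $\bpi_{n-2,n}^{\aone}(SL_m) \to \bpi_{n-2,n}^{\aone}(SL_{m+1})$, the two neighboring terms $\bpi_{n-1,n}^{\aone}(\mathbb{A}^{m+1}\setminus 0)$ and $\bpi_{n-2,n}^{\aone}(\mathbb{A}^{m+1}\setminus 0)$ both vanish as soon as $m \geq n$, so stabilization is an isomorphism throughout this range.

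Finally, iterating stabilization and passing to the limit, together with the identification $\bpi_{i,j}^{\aone}(GL) = \K^Q_{i-j+1}$ recalled in Proposition \ref{prop:linearss}, yields $\bpi_{n-2,n}^{\aone}(SL_m)(k) \cong \K^Q_{-1}(k)$. The latter vanishes because negative algebraic $K$-theory of any regular Noetherian ring is trivial. The main technical obstacle is establishing the $\aone$-equivalence $Q_{2n-1}\simeq \mathbb{A}^n\setminus 0$ cleanly and handling the passage from $GL_m$-bundles to $SL_m$-bundles via $\Pic(Q_{2n-1}) = 0$; once these are in place, the numerical bound $m \geq n$ falls out directly from Morel's connectivity theorem.
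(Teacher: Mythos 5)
The paper does not actually prove this lemma; it simply cites \cite[Corollary 4.7]{AsokFaselSpheres}. Your argument is a faithful reconstruction of the standard proof, and the overall strategy — Morel's representability theorem, the $\aone$-equivalence $Q_{2n-1} \simeq \mathbb{A}^n\setminus 0$, connectivity of $\mathbb{A}^{m+1}\setminus 0$ to get stabilization isomorphisms for $m\geq n$, and the vanishing of $K_{-1}$ of a field — is exactly right. Your index bookkeeping ($\mathscr{V}_m(Q_{2n-1})$ corresponding to $\bpi_{n-2,n}^{\aone}(SL_m)(k)$) is in fact consistent with the paper's Theorem \ref{thmintro:maincomputation}, and the introduction's displayed identification $\mathscr{V}^o_r(Q_{2n-1})\cong\bpi_{n-1,n+1}^{\aone}(SL_r)$ appears to contain a shift error.

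Two small corrections. First, your assertion that $\Pic(Q_{2n-1})=0$ gives a \emph{bijection} $\mathscr{V}_m(Q_{2n-1})\cong[Q_{2n-1},BSL_m]_{\aone}$ is an overclaim: what you get is a surjection from the $SL_m$-side onto $\mathscr{V}_m$ (a bundle may have several inequivalent orientations). Since the goal is to show the target is a singleton, surjectivity is all you need, so the argument stands — but the wording should be corrected. Second, the passage from the stable group $\bpi_{n-2,n}^{\aone}(SL)$ to $\bpi_{n-2,n}^{\aone}(GL)$ (so that you can invoke $\bpi_{i,j}^{\aone}(GL)=\K^Q_{i-j+1}$) deserves an explicit word: from the fiber sequence $SL\to GL\to\gm{}$ and the fact that $\gm{}$ is $\aone$-discrete with vanishing higher contractions, one gets $\bpi_{i,j}^{\aone}(SL)\hookrightarrow\bpi_{i,j}^{\aone}(GL)$ for $i\geq 0$, $j\geq 2$, which suffices. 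Finally, when $m=n=2$ your argument invokes Morel's classification in rank $2$, which in \cite{MField} is stated with a caveat; the refined versions available in the subsequent literature close this gap, so the conclusion is unaffected, but the citation should be to the corrected statement rather than to \cite[Theorem 8.1]{MField} alone.
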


\subsection{Adding trivial summands of small rank}
The topological analog of the sequence of homomorphisms considered in the previous section can be analyzed using classical results.  Precisely, we have the following result.

\begin{prop}
\label{prop:non-triviality}
The homomorphisms $\pi_{11}(BSU(2)) \to \pi_{11}(BSU(m))$ are injective for $3 \leq m \leq 5$.
\end{prop}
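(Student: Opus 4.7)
The plan is to reduce the statement to the classical computation of certain homotopy groups of spheres and then apply a parity argument. First, the loop-space adjunction identifies $\pi_{11}(BSU(m)) \cong \pi_{10}(SU(m))$, so the claim becomes injectivity of the stabilization maps $\pi_{10}(SU(2)) \to \pi_{10}(SU(m))$ for $3 \leq m \leq 5$, where $\pi_{10}(SU(2)) \cong \pi_{10}(S^3) \cong \Z/15$ by Toda.

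Next I would invoke, for each $m \geq 2$, the classical fibration
\[
SU(m) \longrightarrow SU(m+1) \longrightarrow S^{2m+1},
\]
whose long exact sequence in homotopy yields
\[
\pi_{11}(S^{2m+1}) \longrightarrow \pi_{10}(SU(m)) \longrightarrow \pi_{10}(SU(m+1)) \longrightarrow \pi_{10}(S^{2m+1}).
\]
In particular, the kernel of the middle arrow is the image of $\pi_{11}(S^{2m+1})$. I would then record the relevant values of these groups from Toda: $\pi_{11}(S^5) \cong \Z/2$, while $\pi_{11}(S^7)$ and $\pi_{11}(S^9)$ are in the stable range and equal $\pi_4^s = 0$ and $\pi_2^s = \Z/2$, respectively.

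The key observation is then a coprimality argument. For $m = 3$, the kernel of $\pi_{10}(SU(2)) \to \pi_{10}(SU(3))$ is the image of $\pi_{11}(S^5) \cong \Z/2$ in $\Z/15$, which is zero since $\gcd(2,15) = 1$. For $m = 4$, the map $\pi_{10}(SU(3)) \to \pi_{10}(SU(4))$ is injective because $\pi_{11}(S^7) = 0$, so the composite remains injective. For $m = 5$, the kernel of $\pi_{10}(SU(4)) \to \pi_{10}(SU(5))$ is a quotient of $\pi_{11}(S^9) \cong \Z/2$, hence $2$-torsion; on the other hand, the image of $\pi_{10}(SU(2)) \cong \Z/15$ in $\pi_{10}(SU(4))$ is a quotient of $\Z/15$, hence has odd order, so its intersection with the kernel is trivial and injectivity is preserved.

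The only real obstacle is knowing the three unstable/stable groups $\pi_{11}(S^{5})$, $\pi_{11}(S^{7})$, $\pi_{11}(S^{9})$, which is classical. This also explains why the proposition is limited to $m \leq 5$: already for $m = 6$ one encounters $\pi_{11}(S^{11}) \cong \Z$, containing odd torsion after reduction, so the parity argument breaks down and more delicate analysis of the boundary map would be required.
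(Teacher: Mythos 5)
Your argument is correct and follows the same overall strategy as the paper: you run the long exact sequences of the stabilization fibrations $SU(m) \to SU(m+1) \to S^{2m+1}$, and for $m=3$ and $m=4$ your argument and the paper's coincide. The one genuine difference is the $m=5$ step. The paper cites the explicit group $\pi_{11}(BSU(4)) \cong \Z/2 \oplus \Z/5!$ from Kervaire and, crucially, Kervaire's identification of the stabilization map $\pi_{11}(BSU(4)) \to \pi_{11}(BSU(5))$ as the projection onto the $\Z/5!$ summand; you instead stay entirely within the long exact sequence, observing that the kernel of $\pi_{10}(SU(4)) \to \pi_{10}(SU(5))$ is a quotient of $\pi_{11}(S^9) \cong \Z/2$ and hence cannot meet the odd-order image of $\Z/15$. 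Your version is more elementary and self-contained: it needs only the values of $\pi_{11}(S^5)$, $\pi_{11}(S^7)$, $\pi_{11}(S^9)$ (all easily available, the last two in the stable range) plus $\pi_{10}(S^3) \cong \Z/15$, and avoids invoking the explicit structure of $\pi_{11}(BSU(4))$, $\pi_{11}(BSU(5))$ and the precise behaviour of the Kervaire map between them. The paper's version buys an explicit description of the relevant groups, which it records but does not strictly need for injectivity. Your closing observation about $m=6$ is accurate and in fact sharp: $\pi_{10}(SU(6)) = \pi_{10}(SU) = 0$ by Bott periodicity, so stabilization to $m=6$ kills everything.
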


\begin{proof}
We begin by recalling various computations of homotopy groups of special unitary groups.
One knows that $\pi_{11}(BSU(2)) = \pi_{10}(SU(2)) = \pi_{10}(S^3) \cong \Z/15$ \cite[p. 186]{Toda}, $\pi_{11}(BSU(3)) \cong \Z/30$ \cite[Theorem 6.1]{MimuraToda}, $\pi_{11}(BSU(4)) \cong \Z/2 \oplus \Z/5!$  \cite[Lemma I.6]{Kervaire}, and $\pi_{11}(BSU(5)) \cong \Z/5!$ \cite[Theorem 5]{Bott}.  For injectivity when $m = 3$, consider the portion of the long exact sequence in homotopy groups associated with $S^5 \to BSU(2) \to BSU(3)$
\[
\pi_{11}(S^5) \longrightarrow \pi_{11}(BSU(2)) \longrightarrow \pi_{11}(BSU(3)).
\]
We know that $\pi_{11}(S^5) \cong \Z/2$ \cite[p. 186]{Toda}, so the left hand map is zero.  Therefore, the map $\pi_{11}(BSU(2)) \to \pi_{11}(BSU(3))$ must be injective.

Next, consider the long exact sequence in homotopy groups associated with $S^7 \to BSU(3) \to BSU(4)$.  In that case, we have
\[
\pi_{11}(S^7) \longrightarrow \pi_{11}(BSU(3)) \longrightarrow \pi_{11}(BSU(4)) \longrightarrow \pi_{10}(S^7).
\]
In this case, $\pi_{11}(S^7) = 0$ \cite[p. 186]{Toda}, so the map $\pi_{11}(BSU(3)) \to \pi_{11}(BSU(4))$ is injective.  Combining with the conclusion of the previous paragraph, injectivity for $m = 4$ is settled.

For the case $m = 5$, first observe that since $\pi_{11}(BSU(2)) \cong \Z/15$ its image in $\pi_{11}(BSU(4))$ is necessarily contained in the summand isomorphic to $\Z/5!$.  Kervaire's computation of $\pi_{11}(BSU(4))$ mentioned above proceeds by analysis of the long exact sequence in homotopy attached to the fiber sequence $S^9 = SU(5)/SU(4) \to BSU(4) \to BSU(5)$. Indeed, this exact sequence takes the form:
\[
\pi_{11}(S^9) \longrightarrow \pi_{11}(BSU(4)) \longrightarrow \pi_{11}(BSU(5)) \longrightarrow 0,
\]
where the left hand group is $\Z/2$ and so the map $\pi_{11}(BSU(4)) \to \pi_{11}(BSU(5))$ is the projection onto the summand isomorphic to $\Z/5!$.  The result follows.
\end{proof}

The next result is a straightforward consequence of Lemma \ref{lem:stabletriviality}, Proposition \ref{prop:non-triviality}, Theorem \ref{thm:oddtorsion} and the functoriality of complex realization.

\begin{cor}
\label{cor:nontriviality}
If $\xi$ is an element of $\bpi_{5,6}^{\aone}(BSL_2)(\cplx)$ that does not lie in the kernel of the (surjective) complex realization map $\bpi_{5,6}^{\aone}(BSL_2)(\cplx) \to \pi_{11}(BSU(2))$, then the image of $\xi$ under the homomorphism
\[
\Phi_{m,2}: \bpi_{5,6}^{\aone}(BSL_2)(\cplx) \longrightarrow \bpi_{5,6}^{\aone}(BSL_m)(\cplx)
\]
is non-trivial for $3 \leq m \leq 5$.
\end{cor}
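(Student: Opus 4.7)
The proof will be a commutative diagram chase combining the topological injectivity of Proposition \ref{prop:non-triviality} with the naturality of complex realization. Concretely, the plan is to assemble the square
\[
\begin{array}{ccc}
\bpi_{5,6}^{\aone}(BSL_2)(\cplx) & \longrightarrow & \bpi_{5,6}^{\aone}(BSL_m)(\cplx) \\
\downarrow & & \downarrow \\
\pi_{11}(BSU(2)) & \longrightarrow & \pi_{11}(BSU(m))
\end{array}
\]
in which the horizontal maps are induced by the block-diagonal inclusions $SL_2 \hookrightarrow SL_m$ (respectively $SU(2) \hookrightarrow SU(m)$) that correspond, under the bijections recalled in the introduction, to adding a trivial line bundle, and the vertical maps are complex realization. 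The square commutes because complex realization is a functor out of $\hop{\cplx}$ and in particular is natural with respect to the morphism $BSL_2 \to BSL_m$.

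The first step is to recognize that the right-hand vertical map is defined and agrees, under the identification $\bpi_{5,6}^{\aone}(BSL_r)(\cplx) \cong \mathscr{V}_r^o(Q_{11,\cplx})$ and $\pi_{11}(BSU(r)) \cong \mathscr{V}_r^{o,top}(S^{11})$, with $\mathfrak{R}_{r,Q_{11}}$ restricted to oriented bundles; this is immediate from the construction of complex realization on $\aone$-homotopy sheaves together with the fact that $Q_{11}(\cplx)$ is homotopy equivalent to $S^{11}$. The second step is to feed the hypothesis into the diagram: by assumption the image of $\xi$ under the left vertical map is a non-zero element of $\pi_{11}(BSU(2))$.

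Next, I would invoke Proposition \ref{prop:non-triviality} to conclude that the image of $\xi$ in $\pi_{11}(BSU(m))$ is non-zero for each $3 \leq m \leq 5$. Chasing the square clockwise, this image factors through $\bpi_{5,6}^{\aone}(BSL_m)(\cplx)$; if the image of $\xi$ in $\bpi_{5,6}^{\aone}(BSL_m)(\cplx)$ were trivial, its subsequent realization would be trivial as well, contradicting the previous sentence. Hence the image of $\xi$ in $\bpi_{5,6}^{\aone}(BSL_m)(\cplx)$ must be non-trivial, which is exactly the content of the corollary.

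There is no real obstacle beyond bookkeeping: the only nontrivial inputs are the topological injectivity statement (Proposition \ref{prop:non-triviality}), the existence of such a $\xi$ (Theorem \ref{thm:oddtorsion}, combined with adjunction to translate $\bpi_{4,6}^{\aone}(SL_2)$ to $\bpi_{5,6}^{\aone}(BSL_2)$), and naturality of complex realization; Lemma \ref{lem:stabletriviality} provides the complementary upper bound on $m$. The only care needed is to verify that the horizontal arrow in the $\aone$-homotopy row genuinely implements "add a trivial summand"—this is built into the identification of $\mathscr{V}_r^o(X)$ with homotopy classes of maps to $BSL_r$ recalled in \cite{AsokFaselSpheres} and used throughout the paper.
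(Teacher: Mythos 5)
Your argument is correct and is exactly the diagram chase the paper has in mind: the paper states the corollary follows from Lemma~\ref{lem:stabletriviality}, Proposition~\ref{prop:non-triviality}, Theorem~\ref{thm:oddtorsion} and functoriality of complex realization, and your commutative square combined with injectivity of $\pi_{11}(BSU(2))\to\pi_{11}(BSU(m))$ is precisely that. No gaps.
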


\begin{footnotesize}
\bibliographystyle{alpha}
\bibliography{algebraizability}
\end{footnotesize}
\Addresses
\end{document}